\documentclass[11pt]{article}

\usepackage[
	a4paper,
	margin=1in
]{geometry} 
\usepackage{setspace}
\usepackage{sectsty} 
\usepackage{titlesec}
\usepackage{appendix}

\usepackage{amsmath, amssymb, amsthm}
\usepackage{mathtools}

\usepackage[
	setpagesize=false,
	colorlinks=true,
	linkcolor=BrickRed,
        citecolor=OliveGreen,
        urlcolor=black,
	pdfencoding=auto,
	psdextra,
]{hyperref}
\usepackage{cleveref}

\usepackage{etoolbox} 
\usepackage[shortlabels]{enumitem}
\usepackage{xparse} 

\usepackage{graphicx}
\usepackage[dvipsnames]{xcolor}
\usepackage{tikz}

\usepackage{todonotes}
\usepackage[
    deletedmarkup=sout,
    commentmarkup=uwave,
    authormarkuptext=name
]{changes}

\usepackage{comment}

\setlist[enumerate,1]{label=(\arabic*), ref=(\arabic*)}
\setlist[enumerate,3]{label=(\roman*), ref=(\roman*)}

\allsectionsfont{\boldmath} 

\titlelabel{\thetitle.\quad}

\theoremstyle{plain}
\newtheorem{theorem}{Theorem}[section]
\newtheorem{lemma}[theorem]{Lemma}

\newtheorem{proposition}[theorem]{Proposition}

\newtheorem{conjecture}[theorem]{Conjecture}

\newtheorem{case}{Case}
\makeatletter
\@addtoreset{case}{section}
\makeatother

\newtheorem{claim}[theorem]{Claim}
\newtheorem*{claim*}{Claim}

\makeatletter
\newenvironment{claimproof}[1][Proof]{\par
	\pushQED{\qed}%
	
	\normalfont \topsep6\p@\@plus6\p@\relax
	\trivlist
	\item[\hskip\labelsep
	\textit{#1}\@addpunct{.}~]\ignorespaces
}{%
	\popQED\endtrivlist\@endpefalse
}
\makeatother

\theoremstyle{definition}
\newtheorem{definition}[theorem]{Definition}
\newtheorem*{definition*}{Definition}

\newcommand{\ZZ}{\mathbb{Z}}
\newcommand{\EE}{\mathbb{E}}
\newcommand{\PP}{\mathbb{P}}

\newcommand{\calF}{\mathcal{F}}
\newcommand{\calG}{\mathcal{G}}
\newcommand{\calH}{\mathcal{H}}

\newcommand{\calS}{\mathcal{S}}

\newcommand{\calU}{\mathcal{U}}
\newcommand{\calV}{\mathcal{V}}

\newcommand{\abs}[1]{\left\lvert#1\right\rvert}
\newcommand{\Cay}{\mathrm{Cay}}

\newcommand{\defeq}{\coloneqq}

\usetikzlibrary{matrix,arrows,decorations.pathmorphing}
\usetikzlibrary{positioning}
\usetikzlibrary{graphs} 
\usetikzlibrary{graphs.standard}

\newcounter{propcounter}
\stepcounter{propcounter}

\title{Random Cayley sum hypergraphs and $k$-fold sumsets}

\author{
Jihyo Chae%
        \thanks{Department of Mathematics, Yonsei University, South Korea. 
        E-mail: {\ttfamily jihyochae@yonsei.ac.kr.} Supported by the Samsung STF Grant SSTF-BA2201-02, and the National Research Foundation of Korea (NRF) grant funded by the Korea Government(MSIT) NRF-2022R1C1C1010300} \and 
Hyunwoo Lee%
        \thanks{Department of Mathematical Sciences, KAIST, South Korea and Extremal Combinatorics and Probability Group (ECOPRO), Institute for Basic Science (IBS).
        E-mail: {\ttfamily hyunwoo.lee@kaist.ac.kr.} Supported by the National Research Foundation of Korea (NRF) grant funded by the Korea government(MSIT) No. RS-2023-00210430, and the Institute for Basic Science (IBS-R029-C4).}
}

\usepackage[square,sort,comma,numbers]{natbib}
\begin{document}
\maketitle


\begin{abstract}
    We denote by $f_k(\Gamma)$ the largest integer with the property that every subset of a finite abelian group $\Gamma$ of size at least $|\Gamma| - f_k(\Gamma)$ is a $k$-fold sumset. Extending a recent result of Alon and Pham, we prove that
    $$
        f_k(\Gamma) \leq \widetilde{O} \left(n^{(2k-1)/(4k-3)}\right)
    $$
    holds for all finite abelian groups $\Gamma$ and integers $k \geq 2$, where $n = |\Gamma|$. Additionally, we also show that the lower bound  $f_k(\Gamma) \geq \widetilde{\Omega} \left(n^{1/k}\right)$ holds if $\Gamma$ has no nontrivial element of order dividing $k$. Our upper bound improves a previous result of Balogh, Liu, and Sharifzadeh, and recovers the bound of Alon and Pham in the case $k = 2$. The proof relies on a new upper bound for the independence number of random Cayley sum hypergraphs, which may be of independent interest.
\end{abstract}


\section{Introduction}\label{sec:intro}

The study of additive structures in a subset of a given abelian group has been of great interest. In particular, analyzing the behavior of sets with a given additive structure from multiple perspectives has led to the development of various problems in additive combinatorics, most of which concern sumsets in an additive group.

For a given abelian group $\Gamma$ and its subset $A\subseteq \Gamma$, we define the \emph{sumset} of $A$ as the set $\{ a_1 + a_2: a_1, a_2\in A \} \subseteq \Gamma$ and denote as $A + A$ or $2A$. Similarly, for an integer $k \geq 2$, we define \emph{$k$-fold sumset} of $A$ as the set $\{\sum_{i\in [k]} a_i: a_i\in A \}$ and denote as $k A$.
An important quantity related to the additive structure of a set is the \emph{doubling constant}, defined as the ratio $\frac{|2A|}{|A|}$ for a given finite set $A\subseteq \Gamma$. The fundamental theorem that connects the doubling constant and the additive structure of the set is Freiman's theorem \cite{FR64}, which says that if a subset $A$ has a small doubling constant, then $A$ contains a rich additive structure, such as arithmetic progressions.

Since ($k$-fold) sumsets play a central role in additive combinatorics, it is natural to ask the size and number of sumsets in a given group. One observation of Green \cite{Green-essay} was that it is easier to find a representation of the form $S=A+A$ whenever $S$ is small. It was therefore natural to ask whether all large sets in a finite group are sumsets. Green answered this question positively and provided a bound on the threshold for this phenomenon. We consider a higher uniformity version of Green's question. For an integer $k \geq 2$, denote by $f_k(\Gamma)$ the largest integer with the property that every subset of $\Gamma$ of size at least $|\Gamma| - f_k(\Gamma)$ is a $k$-fold sumset.

More than twenty years ago, Green~\cite{Croot-Lev,Green-essay} showed that the bound $f_2(\ZZ_p) \geq \Omega(\log p)$ holds for all sufficiently large prime $p$, and consequently raised a problem to determine the value $f_2(\ZZ_p)$.
Later, Alon~\cite{A07} improved the lower bound to $f_2(\ZZ_p)\geq \tilde{\Omega}(p^{1/2})$ and Green and Gowers~\cite{B17}, and Alon~\cite{A07} provided the upper bound $f_2(\ZZ_p)\leq \tilde{O}(p^{2/3})$. Recently, the upper bound was considerably improved and extended to every finite abelian group as follows.

\begin{theorem}[Alon--Pham~\cite{AH25-arxiv}]\label{thm:prev}
    For all finite abelian group $\Gamma$ of order $n$, we have $f_2(\Gamma) \leq \widetilde{O}(n^{3/5})$.
\end{theorem}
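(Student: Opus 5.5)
The plan is probabilistic: we exhibit a large set that is not a sumset. Set $m = \widetilde{\Theta}(n^{3/5})$ and let $S = \Gamma \setminus R$, where $R$ is a random subset of $\Gamma$ that contains each element independently with probability $p = m/n$. Then $|R| \approx m$ with high probability. The set $S$ is a sumset $A+A$ exactly when some $A \subseteq \Gamma$ has $(A+A) \cap R = \emptyset$ and $A+A \supseteq S$. The first condition says that $A$ is an independent set in the random Cayley sum graph $G_R$ on $\Gamma$, where $x \sim y$ whenever $x+y \in R$ (elements with $2x \in R$ are excluded from $A$). The second condition forces $A$ to be large: $|A|^2 \geq |A+A| \geq n - |R|$, so $|A| \geq (1-o(1))\sqrt{n}$. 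It therefore suffices to show that, with high probability, $G_R$ has no independent set $A$ with $|A| \geq c\sqrt n$ and $|A+A| \geq n-2m$.

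The graph $G_R$ is roughly $pn$-regular. Naive union bounds over all $\sqrt n$-sets lose too much, because the edges of $G_R$ are correlated: all pairs with the same sum share one coin. I will use a container or structural approach in the style of Alon--Pham. Given a candidate $A$ of size $t \approx \sqrt n$, the event that $A$ is independent means that $R$ avoids $A+A$, which has size at least $n-2m$. That event has probability $(1-p)^{|A+A|} \approx e^{-pn} = e^{-m}$. The entropy is the number of possible $A$, which is at most $\binom{n}{t} \approx e^{\sqrt n \log n}$. This gives only $m \gtrsim \sqrt n\log n$, which is clearly too strong, so the fix must be that $A+A$ is not determined by $A$ alone.

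More carefully, we should count pairs $(A, A+A)$ via a small "fingerprint". Since $A+A$ must be essentially all of $\Gamma$, the needed savings come from describing $A$ by a subset $B \subseteq A$ of size $s$ with $|B+B|$ already large. Then $R$ must avoid $B+B$, which happens with probability $e^{-p|B+B|}$. I will choose $B$ greedily: adding elements of $A$ to $B$ one at a time, each new element contributes many new sums as long as $B+B$ is not yet dense. Balancing $\binom{n}{s}$ against $e^{-p|B+B|}$, together with a lower bound $|B+B| \gtrsim \min(s^2, \ldots)$ obtained from an energy or Cauchy--Schwarz argument on $A$, should produce the threshold $p \approx n^{-2/5}$, that is, $m \approx n^{3/5}$.

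Concretely, take $s \approx n^{1/5}$. A random subset $B$ of $A$ should have $|B+B| \gtrsim s^2 = n^{2/5}$ sums unless $A$ has large additive energy. In the typical case the failure probability is then $e^{-p s^2}$ against entropy $n^{s}$, which is fine once $p s^2 \gg s\log n$, i.e. $p \gg \log n / s$. Reaching the right exponent needs a second stage: we also use the rest of $A$, namely that $R$ avoids sums $b + a$ for $b\in B$ and $a\in A$.

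The main obstacle is the high-energy case. If $A$ is close to a coset-like structure (small doubling), the savings from $B+B$ collapse. But then $A$ is itself structured and has few choices, so I would handle it separately: small doubling forces $A$ into a few translates of a subgroup-like set via Freiman-type or Pl\"unnecke arguments, and such sets can be counted directly. Making the two regimes meet at exponent $3/5$ is the delicate balancing step.
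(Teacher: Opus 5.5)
There is a genuine gap. Your reduction is correct: if $|R|\le 2m$ and $\Gamma\setminus R=A+A$, then $A$ is independent in $G_R$ and $|A+A|\ge n-2m$. But you misdiagnose why the naive union bound fails, and so the proposal never supplies the step that carries the theorem.

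The union bound does not break down at $|A|\approx\sqrt n$. Summing $\binom{n}{t}(1-p)^{n-2m}$ over all $t\le c\,m/\log n$ gives $e^{-\Omega(m)}$, so every candidate of that size is already excluded at $m=\widetilde\Theta(\sqrt n)$. (And $A+A$ is of course determined by $A$.) It breaks down because an independent set of $G_R$ may be much larger than $m/\log n$, and the condition $|A+A|\ge n-2m$ does not pass to subsets, so you cannot shrink $A$. The whole content of the theorem is therefore an upper bound on $\alpha(G_R)$. Once you know $\alpha(G_R)\le\beta=\widetilde O(p^{-3/2})$ w.h.p.\ (Theorem~\ref{thm:Alon-Pham-indep}, the case $k=2$ of Theorem~\ref{thm:ind}), your framework closes at once. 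Indeed $n^{\beta+1}(1-p)^{n-2m}=o(1)$ as soon as $\beta\log n\ll pn$, i.e.\ $p\gg n^{-2/5}$ up to logarithms. That would be a perfectly good substitute for the paper's counting over $S_2$. Note that the spectral (Hoffman) bound $\alpha(G_R)\lesssim\sqrt{n\log n/p}$ only yields $n^{2/3}$, so the additive structure of $A+A$ must be used.

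The mechanism you sketch for this bound does not work.
\begin{itemize}
\item With $s\approx n^{1/5}$ and $p\approx n^{-2/5}$ you get $ps^2=\widetilde O(1)$ against entropy $s\log n$. Your own condition $p\gg\log n/s$ would need $p\gg n^{-1/5}$.
\item The ``second stage'' using sums $b+a$ with $a\in A$ requires enumerating $A$. That is exactly the entropy the fingerprint was meant to avoid.
\item Freiman/Pl\"unnecke-type structure theorems are unusable here, because the doubling constants that matter are polynomial in $n$.
\end{itemize}

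The missing idea is a popularity decomposition combined with two different covers.
\begin{itemize}
\item Split $A+A$ into dyadic levels $\mathbf{A}_\ell$ according to the value of $1_A*1_A$, and pigeonhole a level with $|\mathbf{A}_\ell|\gtrsim 2^\ell|A|/(\ell+1)^2$ (Lemma~\ref{lem:pig}).
\item For popular levels, approximate $1_A*1_A$ uniformly by an average of $O(2^{2\ell}\log n)$ characters, drawn with probability proportional to $|\widehat{1_A}(\chi)|^2$, and threshold the result. This gives sets $F$ with $\mathbf{A}_\ell\subseteq F\subseteq A+A$ from a family of size $\exp(O(2^{2\ell}\log^2 n))$ (Lemmas~\ref{lem:FSA} and~\ref{lem:cov1}).
\item For unpopular levels, take a random $B\subseteq A$ of density about $(2^\ell\log n/|A|)^{1/2}$, so that $B+B\supseteq\mathbf{A}_\ell$ w.h.p. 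This costs entropy $\exp(\widetilde O((2^\ell|A|)^{1/2}))$ (Lemma~\ref{lem:cov2}).
\end{itemize}
Against the factor $(1-p)^{|\mathbf{A}_\ell|}$, the Fourier cover wins when $2^\ell\lesssim p|A|$ and the sampling cover wins when $2^\ell|A|\gtrsim p^{-2}$, up to logarithms. These two ranges cover every $\ell$ exactly when $|A|\gtrsim p^{-3/2}$. Equating $p^{-3/2}$ with $pn$ then gives $n^{3/5}$.
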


For $k$-fold sumsets, the previous best known upper bound of $f_k(\ZZ_p)$ is obtained by Balogh, Liu, and Sharifzadeh in the appendix from the arXiv version of~\cite{BLS}. 

\begin{theorem}[Balogh--Liu--Sharifzadeh~\cite{Balogh-Liu-Sharifzadeh-arxiv}]\label{thm:bls}
    For all prime $p$ and integer $k \geq 2$, we have
    $$
        f_k(\ZZ_p)\leq O\left(p^{k/(2k-1)}\right).
    $$
\end{theorem}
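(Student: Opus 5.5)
To prove the bound we exhibit, for $m = C p^{k/(2k-1)}$ with $C$ a large constant, a set $S \subseteq \ZZ_p$ with $|S| \ge p - m$ that is not a $k$-fold sumset. The construction is random. Let $T$ be a random subset of $\ZZ_p$ obtained by including each element independently with probability $q = m/(2p)$, and set $S = \ZZ_p \setminus T$. By Chernoff, $|T| \le m$ with high probability, so it suffices to show that with positive probability no set $A$ satisfies $kA = S$, equivalently $kA \cap T = \emptyset$ together with $kA \supseteq S$.

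We split according to the size of $A$. If $kA = S$, then $|kA| \ge p - m$, and trivially $|kA| \le \binom{|A|+k-1}{k}$. Hence $|A| \ge c\, p^{1/k}$.

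\emph{Small $A$.} Suppose $|A| \le t$ for a threshold $t$ to be chosen. By Cauchy--Davenport, $|kA| \ge \min(p, k|A| - k + 1)$, but this lower bound is weak. Instead we use that $A$ must avoid $T$ in the stronger sense $kA \cap T = \emptyset$. For a fixed $A$ with $|kA| \ge p - m \ge p/2$, the probability that $T \cap kA = \emptyset$ is $(1-q)^{|kA|} \le e^{-qp/2} = e^{-m/4}$. A union bound over $\binom{p}{\le t} \le e^{t \log p}$ sets shows that no such $A$ exists whenever $t \log p \ll m$.

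\emph{Large $A$.} For $|A| > t$, we avoid union-bounding over all large sets $A$. Instead fix any subset $A' \subseteq A$ of size exactly $t$. Then $(k-1)A' + A \subseteq kA$ must avoid $T$, so every $a \in A$ satisfies $a + (k-1)A' \subseteq S$. Fixing $A'$ and letting $B = (k-1)A'$, the set $A$ lies in $X_{A'} = \{x : x + B \subseteq S\}$. Conversely, $kA \supseteq S$ forces $kA$ to be large. We aim to show that $X_{A'}$ is small, namely $|X_{A'}| \le O(p^{1/k})$, so that it cannot support $|A| \ge c p^{1/k}$ in a way that also covers $S$.

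Computing the expectation gives $\EE|X_{A'}| = p (1-q)^{|B|}$. Since $|B| \ge (k-1)t - k + 2$ by Cauchy--Davenport, the expectation is at most $p\, e^{-q(k-1)t/2}$. We need this to hold simultaneously for all $A'$, which requires concentration: $|X_{A'}|$ is a function of $T$ whose value changes by at most $|B|$ when a single element is toggled, so Janson- or Kim--Vu-type bounds give tails of order $e^{-\Omega(\cdot)}$. These must beat the $e^{t\log p}$ choices of $A'$.

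\emph{Balancing.} Choosing $t$ roughly $m/\log p$ and requiring $p\, e^{-qkt} \lesssim p^{1/k}$, i.e.\ $q t \gtrsim \log p$, gives $m^2/p \gtrsim \log^2 p$. This is weaker than the target, so the exponent $k/(2k-1)$ should come out of a sharper counting step. In that step, rather than requiring $|X_{A'}|$ to be tiny, we require that $X_{A'}$ generate a $k$-fold sumset of size at most $|X_{A'}|^k/k!$ that still covers $S$. This forces $|X_{A'}|^k \gtrsim p$ while $\EE|X_{A'}| \approx p\, e^{-qkt}$. Optimizing $t$ against the union bound then yields $m \approx p^{k/(2k-1)}$; the exponent arises from equating $t \approx m$ with the decay $q t^{(k-1)/k}$ of the sumset-avoidance probability.

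\emph{Main obstacle.} The main obstacle is this last step: controlling $|X_{A'}|$ uniformly over all $\binom{p}{t}$ choices of $A'$ with good enough tails, given the dependencies among the events $x + B \subseteq S$. I would first try the Kim--Vu polynomial concentration inequality, or alternatively the container-free Janson inequality applied to the family of translates $\{x + B\}$. If the dependencies are too strong for these tools, I would fall back on a deletion argument: remove from $T$ a small set so that every translate $x + B$ is hit.
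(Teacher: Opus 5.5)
Your outer framework is sound. Chernoff for $|T|$, the bound $|A|\ge c\,p^{1/k}$, and the union bound over all $A$ with $|A|\le t\approx m/\log p$ via $(1-q)^{|kA|}\le e^{-m/4}$ are correct. This is the same two-step shape as the paper's proof of the upper bound in Theorem~\ref{thm:main}; the paper only cites this statement, which that bound implies. Your small-$A$ union bound is a valid substitute for the paper's counting over $S_2$.

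The gap is the ``Large $A$'' case, which carries the entire content of the theorem. Any $A$ with $kA=S$ is an independent set of $\Cay_k^+(\ZZ_p;T)$. So what you need is that, with high probability, this hypergraph has no independent set of size $t$. With $m\asymp p^{k/(2k-1)}$ and $q=m/(2p)$, this is a bound $\alpha(\Cay_k^+(\ZZ_p;q))\lesssim q^{-k/(k-1)}/\log p$, and nothing in your proposal proves it.

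Your fingerprint device is circular as set up. You take $|A'|=t$, equal to the threshold. Every $A'$ that matters (a subset of some $A$ with $kA=S$) satisfies $kA'\subseteq S$, hence $A'\subseteq X_{A'}$ and $|X_{A'}|\ge t\gg p^{1/k}$ deterministically. So for the relevant $A'$, the target $|X_{A'}|=O(p^{1/k})$ just restates ``there is no independent $t$-set''. The unconditional expectation $p(1-q)^{|B|}$ says nothing about exactly those $A'$. A fingerprint has to be much smaller than the sets it encodes.

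Uniform control over all $\binom{p}{t}$ choices of $A'$ also cannot come from a concentration inequality plus a union bound. Let $A'$ be an arithmetic progression with difference $d$. Then $X_{A'}$ contains $s$ consecutive terms as soon as one run of $s+(k-1)(t-1)$ consecutive terms in direction $d$ avoids $T$. Hence $\PP[|X_{A'}|\ge s]\ge (1-q)^{s+(k-1)(t-1)}=e^{-O(q(s+kt))}$, which is far larger than $\binom{p}{t}^{-1}\le e^{-t\log(p/t)}$ since $q\to 0$. So no tail bound valid for every $A'$ survives the union bound, whether it comes from Kim--Vu, Janson, bounded differences, or deletion. Sets with small $(k-1)$-fold sumsets must be handled separately; that is a Freiman-type counting problem, and it is exactly where the difficulty lies.

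The paper handles this in Theorem~\ref{thm:ind}. It covers all $k$-partite holes by a low-complexity family whose members are large subsets of $A_1+\cdots+A_k$, and takes the union bound over the cover rather than over all $A'$. For structured tuples it uses Fourier-sparse approximations of $A_1*\cdots*A_k$ (Lemma~\ref{lem:cov1}). For the rest it uses random subsamples $B_i\subseteq A_i$ controlled by Janson and Paley--Zygmund (Lemma~\ref{lem:cov2}). At your $q$, Theorem~\ref{thm:ind} gives $\alpha=\widetilde O(\sqrt p)$, which is below $t$, so plugging it into your small-$A$ step does prove the statement.

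Finally, your balancing heuristic gives $m\approx\sqrt p\log p$, which is \emph{stronger}, not weaker, than $p^{k/(2k-1)}$. For $k=2$ it would match Alon's lower bound $\widetilde\Omega(\sqrt p)$ and beat Theorem~\ref{thm:prev}. That is a symptom of the missing uniform control. The ensuing ``derivation'' of the exponent from a decay $q\,t^{(k-1)/k}$ is not an argument.
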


In this paper, we obtain a polynomial improvement of Theorem~\ref{thm:bls} by extending the upper bound of Theorem~\ref{thm:prev} to all $k$, and also provide a lower bound as follows. 

\begin{theorem}\label{thm:main}
    For all integer $k \geq 2$ and a finite abelian group $\Gamma$ of $n$, we have
    $$
        f_k(\Gamma)\leq \widetilde{O}\left(n^{(2k-1)/(4k-3)}\right).
    $$ 
    If $\Gamma$ has no nontrivial element of order dividing $k$, then we also have
    $$
        f_k(\Gamma) \geq \widetilde{\Omega}\left(n^{1/k}\right).
    $$
\end{theorem}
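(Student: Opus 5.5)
The plan has two parts. The upper bound comes from a random complement combined with an upper bound on the independence number of a random Cayley sum hypergraph. The lower bound comes from a random-deletion construction in the spirit of Alon's argument for $k=2$.

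\textbf{Upper bound.} Fix $p = n^{-(2k-2)/(4k-3)}$ up to polylogarithmic factors, and let $X \subseteq \Gamma$ contain each element independently with probability $p$. Then $|X| = \widetilde{O}(n^{(2k-1)/(4k-3)})$ with high probability, and I aim to show that with positive probability $S \defeq \Gamma \setminus X$ is not a $k$-fold sumset.

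Suppose instead that $kA = S$. Then no $k$ elements of $A$ (repetitions allowed) sum into $X$. So $A$ is an independent set in the Cayley sum hypergraph $H_X$, whose edges are the multisets $\{a_1,\dots,a_k\}$ with $a_1+\dots+a_k\in X$. At the same time $|kA| \geq n - |X|$, so $A$ has many distinct $k$-sums; in particular $|A| \gtrsim n^{1/k}$.

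The core of the proof is therefore a lemma of the form: with high probability, every set $A$ with $kA \cap X = \emptyset$ and $|kA|\ge n/2$ is smaller than the size forced by $|kA| \geq n-|X|$. I would prove this by a union bound over \emph{small witnesses} rather than over $A$ itself, following the approach of Alon--Pham behind Theorem~\ref{thm:prev}:
\begin{enumerate}
\item From any such $A$, greedily extract a subset $A_0 \subseteq A$ of size $t$ (polylogarithmic times a power of $n$) whose $k$-fold sumset $kA_0$ is large, say $|kA_0| \geq D$. Such an $A_0$ exists because $|kA|$ is huge: adding a random element of $A$ to a partial $A_0$ increases $|kA_0|$ substantially until $kA_0$ is large.
\item Since $kA_0 \subseteq kA$ must avoid $X$, a fixed $A_0$ survives with probability at most $(1-p)^{D}\le e^{-pD}$.
\item There are at most $n^{t}$ choices of $A_0$, so the union bound succeeds as soon as $pD \gg t\log n$.
\end{enumerate}
The exponent $(2k-1)/(4k-3)$ should come out of optimizing $t$ against the growth of $D$ in $t$. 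Roughly, $D$ behaves like $t^{k}$ while $t^{k}\ll n$, and only like $n$ times a density-increment factor afterwards. The argument needs two regimes: one where $A_0$ has near-maximal $k$-sums (low additive energy), and a structured regime where many $k$-sums coincide.

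The main obstacle is this step: guaranteeing $D$ large for a witness of small size $t$ when $A$ has high additive energy. My first attempt would split according to the $k$-fold energy of $A$. In the low-energy case I would take $A_0$ random inside $A$ and use second moments to show $|kA_0| \approx t^{k}/k!$. In the high-energy case I would use the fact that $kA$ covers almost all of $\Gamma$ to find translates whose $(k-1)$-sums cover different parts of $\Gamma$, so that a witness built from them still produces $\gtrsim n$ times a small factor of distinct sums. This reduces to the $k=2$ analysis of Theorem~\ref{thm:prev} at the endpoint.

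\textbf{Lower bound.} Let $S\subseteq\Gamma$ with $T \defeq \Gamma\setminus S$ of size $|T| \le c\, n^{1/k}/\log n$. Take a random set $R$ of density $q$, with $q^{k} n^{k-1} = C\log n$, so that each $s\in S$ has on average $\asymp \log n$ representations as a sum of $k$ elements of $R$. Then delete from $R$ one element of every $k$-tuple whose sum lies in $T$, and call the result $A$.

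The expected number of bad tuples is $|T|\, q^{k} n^{k-1} \approx |T|\log n$. We need to control which elements of $R$ get deleted so that every $s \in S$ keeps at least one representation with no deleted element. Each deletion affects only the tuples containing the deleted element, and Janson-type concentration shows that each $s$ loses only a negligible fraction of its representations provided $|T|\log n \ll qn \approx n^{1/k}$. This gives $kA = S$ with positive probability.

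The hypothesis that $\Gamma$ has no nontrivial element of order dividing $k$ enters through degenerate tuples with repeated entries. Because multiplication by $k$ is then injective, an equation such as $kx = \tau$ or $jx + y = \tau$ has at most one solution in the repeated variable. Hence these degenerate tuples contribute only lower-order terms to the counts above.
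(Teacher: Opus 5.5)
Your lower bound follows the paper's route. You take a random set of density $(C\log n/n^{k-1})^{1/k}$, delete every element lying in a $k$-tuple that sums into the forbidden set, and use injectivity of $x\mapsto kx$ for the diagonal tuples. That injectivity is all the hypothesis gives; your remark about $jx+y=\tau$ for $j<k$ does not follow from it, but those tuples are lower order by counting free variables anyway. One caveat: the number of representations of a target destroyed by the deletions is an upper-tail quantity, which Janson's inequality does not control. You need bookkeeping like Claims~\ref{clm:3-statements}--\ref{claim:lb-claim3}: at most $\log n$ pairwise disjoint bad configurations and bounded overlap degree, each failing with probability $o(1/n)$.

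The upper bound has a genuine gap at exactly the step you flag, and a union bound over witnesses $A_0\subseteq A$ at cost $n^{t}(1-p)^{D}$ cannot close it. Already for $k=2$, let $N=m^3$ with $m\ge3$, $P=\{0,1,\dots,\lceil m^2/2\rceil-1\}$, $R=m^2\cdot\{0,1,\dots,m-1\}$, and $A=P\cup R\subseteq\ZZ_N$. Then $|2A|\ge|P+R|\ge N/2$ and $2A\neq\ZZ_N$. Yet every $A_0\subseteq A$, including anything assembled from translates as in your high-energy sketch, satisfies $|2A_0|\le|2(A_0\cap P)|+|A_0\cap P|\,|A_0\cap R|+|2(A_0\cap R)|=O(m|A_0|)$. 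So the union bound only goes through for $p\gtrsim N^{-1/3}\log N$, which yields $f_2\lesssim\widetilde O(N^{2/3})$, not $N^{3/5}$.

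For $k\ge3$ you give no argument either. In the paper's language, covers built from random subsets alone certify $2^\ell s$ sums of popularity $2^{-\ell}$ only at a cost of about $(2^\ell s)^{1/k}$ elements. At $\ell=0$ this gives the hole-number bound $p^{-k/(k-1)}$, and hence only the Balogh--Liu--Sharifzadeh exponent $k/(2k-1)$. Your heuristic is also inconsistent with the target: if $D\approx t^k$ held for all $t^k\ll n$, then taking $t\approx(\log n/p)^{1/(k-1)}$ would give $f_k=\widetilde O(n^{1/k})$, which is the paper's open conjecture. So $(2k-1)/(4k-3)$ cannot come out of that optimization.

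The missing idea is a certificate for the popular sums that is not a subset of $A$. Lemma~\ref{lem:FSA} approximates $A_1*\cdots*A_k$ uniformly by $O(2^{2\ell}\log n)$ characters, using $\sum_\chi\prod_j|\hat A_j(\chi)|\le\alpha^{(k-1)/k}$ (Lemma~\ref{lem:lp}) in place of Parseval. Thresholding these approximations gives a family of $\exp(O(2^{2\ell}\log^2 n))$ sets, one of which lies between $\mathbf{A}_\ell$ and $A_1+\cdots+A_k$ (Lemma~\ref{lem:cov1}). Its complexity depends on the popularity level rather than on $|A|$, which is exactly what subset witnesses lack in the example. Random sampling (Lemmas~\ref{lem:prob} and~\ref{lem:cov2}) is used only for $\ell\ge\ell_0$. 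Balancing the two at $2^{\ell_0}\approx p^{-1/(2k-2)}$ gives Theorem~\ref{thm:ind}, namely $\alpha_k\le s_0=\widetilde O(p^{-(2k-1)/(2k-2)})$.

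With that bound in hand, your random-complement framework does close, with $A$ itself as the witness. If $kA=\Gamma\setminus X$, then $A$ is independent in $H_X$, so $|A|\le s_0$. Then $n^{s_0}(1-p)^{n/2}\to0$ once $pn\gg s_0\log n$, i.e.\ for $p\approx n^{-(2k-2)/(4k-3)}$ up to logarithms. The paper reaches the same conclusion by pigeonholing over an extra deleted set $S_2$.
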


The main idea in the proof of Theorem~\ref{thm:prev} was to use an improved upper bound on the independence number of a random Cayley sum graph. To extend this approach to the hypergraph setting, which is more suitable for studying $k$-fold sumsets, we introduce the \emph{Cayley sum $k$-graph} of a finite abelian group $\Gamma$ with generating set $S \subseteq \Gamma$, denoted by $\Cay_k^+(\Gamma; S)$. Its vertex set is $\Gamma$, and its edge set consists of all $k$-multisets $\{a_1,\ldots,a_k\}$ such that $a_1+\cdots+a_k \in S$.
In contrast to general $k$-uniform hypergraphs, this Cayley sum $k$-graph permits repeated vertices within an edge; that is, each edge is regarded as a multiset rather than a set. For a real number $0 \leq p \leq 1$, we denote by $\Cay_k^+(\Gamma; p)$ the random Cayley sum $k$-graph with the generating set $S$, which is a $p$-random subset of $\Gamma$, that is, each element of $\Gamma$ is chosen as a member of $S$ with probability $p$ independently.

In the last decade, there have been extensive studies on the independence number of random Cayley (sum) graphs, for instance~\cite{A07,A13,CFPY,CDM,AH25-arxiv}. To the best of our knowledge, the current best upper bound on the independence number of a random Cayley sum graph for small values of $p = o(1)$ is due to Alon and Pham~\cite{AH25-arxiv}.

\begin{theorem}[Alon--Pham~\cite{AH25-arxiv}]\label{thm:Alon-Pham-indep}
    Let $\Gamma$ be a finite abelian group of order $n$ and $p \in (0, 1/2]$ be a real number. Then we have
    $$
        \alpha(\Cay_2^+(\Gamma; p)) \leq \widetilde{O}\left( p^{-3/2} \right)
    $$
    with high probability.
\end{theorem}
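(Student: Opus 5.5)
We adapt the strategy of Alon and Pham. The statement (Theorem~\ref{thm:Alon-Pham-indep}) asks for the bound $\alpha(\Cay_2^+(\Gamma;p))\le \widetilde{O}(p^{-3/2})$ with high probability.

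\textbf{Reduction.} A set $I$ is independent in $\Cay_2^+(\Gamma;S)$ exactly when $(I+I)\cap S=\emptyset$, where $I+I$ includes the sums $2a$. Fix $m=Cp^{-3/2}\log^{c} n$. It suffices to show that with high probability no $m$-set $I$ satisfies $(I+I)\cap S=\emptyset$. For a fixed $I$ this event has probability $(1-p)^{|I+I|}\le e^{-p|I+I|}$. The naive union bound over $\binom{n}{m}$ sets only yields $\alpha=\widetilde{O}(1/p)\cdot\log n$ when $|I+I|$ is large; the difficulty is sets with small doubling, and in any case we cannot afford $n^m$ choices.

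\textbf{Containers and fingerprints.} We avoid enumerating all $I$ by encoding each $I$ through a small fingerprint. Pick a random-like subset $T\subseteq I$ of size $t\approx p^{-1/2}$ up to logs. Then $T+T$ already has size about $t^2\approx p^{-1}$ unless $T$ has strong additive structure, and $I$ avoids $S$ on $T+I$.

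\begin{enumerate}
\item \textbf{Typical elements.} For each $a\in I$, the set $a+T$ must avoid $S$. Since $|T+I|\ge |I|$, it follows that $(T+I)\cap S=\emptyset$.
\item \textbf{Sizes.} Set $X=\Gamma\setminus(S-T)$, the elements $x$ with $x+T$ disjoint from $S$. Then $I\subseteq X$. For fixed $T$, $\EE|X|\approx n(1-p)^{t}$, which is small once $t\gtrsim p^{-1}\log n$. That is too large a $t$, so we iterate: choose $T_1,T_2,\dots$ greedily so that each step shrinks the candidate set $X$ by a constant factor, while paying only $n^{|T_j|}$ in the union bound.
\item \textbf{Counting.} Sum over fingerprints $T$ of size $\widetilde{O}(p^{-1/2})$. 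For each fingerprint, bound the probability that the container $X$ holds an $m$-set $I$ with $I+I$ disjoint from $S$. Since $|I+I|\ge |I|$ and $S$ restricted to $X+X$ is still random given the exposed part, this contributes $e^{-p m}$ times the number of choices. Matching $n^{t}$ against $e^{-pm}$ with $t\approx p^{-1/2}$ gives $m\approx p^{-3/2}$ up to logs.
\end{enumerate}

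\textbf{Main obstacle: dependence.} The events for $T+T$, $T+X$ and $X+X$ all use the same random $S$. We need to expose $S$ only on $T+\Gamma$ (of size at most $tn$) when building the container, and then use fresh randomness on the part of $I+I$ lying outside $T+\Gamma$. This requires showing that $|(I+I)\setminus (T+\Gamma)|$ is not small, which fails if $T+\Gamma=\Gamma$. So the fingerprint should instead expose $S$ on $T+I'$ for a subset $I'$, and we need a counting lemma saying that for most choices of $T\subseteq I$ the set $(I+I)\setminus(T+T)$ still has size at least $|I|$. For small-doubling $I$ we would instead handle the case via Freiman/Green–Ruzsa structure, namely a coset progression, and count such structures directly; there are at most $n^{O(\log)}$ of them.

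I expect this decoupling and the treatment of structured $I$ to be the crux. The remaining steps are routine union-bound calculations.
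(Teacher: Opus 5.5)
There is a genuine gap: you never construct the object that makes the union bound work. For every $m$-set $I$ you need a set $F\subseteq I+I$ with three properties: it is determined by a low-complexity fingerprint, it does \emph{not} depend on $S$, and $|F|$ is large compared with $1/p$ times the logarithm of the number of fingerprints. Then the probability that some $m$-set is independent is at most $\sum_F(1-p)^{|F|}$.

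Your counting step (item~3) multiplies $n^{t}$ by $e^{-pm}$, but this miscounts. For a fixed fingerprint, the event is that \emph{some} $m$-subset of the container $X$ has sumset avoiding $S$. A union bound over those subsets costs an extra factor $\binom{|X|}{m}$, which swamps $e^{-pm}$ unless $|X|$ is essentially $m$. With $t\approx p^{-1/2}$ you have $\EE|X|\approx n(1-p)^t\approx n$.

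Nor can $T$ certify anything by itself. Since $|T+T|\le t^2\approx p^{-1}$, the probability that $(T+T)\cap S=\emptyset$ is bounded below by a constant. So in expectation a constant fraction of all $t$-subsets of $\Gamma$ are independent.

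The iteration in item~2 does not rescue this. Each constant-factor shrinking of $X$ needs about $1/p$ new fingerprint elements, so $t\approx p^{-1}\log n$ in total, and then your own accounting gives only $m\approx p^{-2}$ up to logarithms. Moreover $X=\Gamma\setminus(S-T)$ is defined through $S$, which is exactly what creates the dependence problem you leave open. In short, $t\approx p^{-1/2}$ is reverse-engineered from the target exponent rather than produced by a mechanism.

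The structured case cannot be handled via Freiman/Green--Ruzsa either. The union bound must handle every doubling constant up to $|I|$, including polynomially large ones, where such theorems give nothing usable. Even for bounded doubling $K$, knowing $I\subseteq P$ with $|P|\le f(K)|I|$ leaves $e^{\Omega(|I|)}$ choices of $I$ inside $P$ (once $|P|\ge 2|I|$). The factor $(1-p)^{|I+I|}\ge e^{-2pK|I|}$ cannot pay for these when $p\ll 1/K$.

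The missing idea, due to Alon--Pham and generalized in the paper's proof of Theorem~\ref{thm:ind}, is to certify part of $I+I$ through representation counts.
\begin{itemize}
\item Let $r(x)$ be the number of representations $x=a+b$ with $a,b\in I$. Split $I+I$ into dyadic level sets $\mathbf{A}_\ell=\{x: r(x)\approx 2^{-\ell}|I|\}$, and pigeonhole an $\ell$ with $|\mathbf{A}_\ell|\gtrsim 2^\ell|I|/(\ell+1)^2$.
\item For popular sums (small $\ell$), approximate $1_I*1_I$ in sup norm, to accuracy about $2^{-\ell}|I|/n$, by an average of $O(4^\ell\log n)$ characters. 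The characters are sampled with probability proportional to $|\widehat{1_I}(\chi)|^2$, using Parseval plus Hoeffding. A super-level set of this trigonometric polynomial gives $F$ with $\mathbf{A}_\ell\subseteq F\subseteq I+I$, drawn from a family of size $\exp(O(4^\ell\log^2 n))$.
\item For unpopular sums (large $\ell$), a random $B\subseteq I$ of size about $(2^\ell|I|\log n)^{1/2}$ satisfies $\mathbf{A}_\ell\subseteq B+B\subseteq I+I$ with high probability. This gives a family of size $\exp(\widetilde{O}((2^\ell|I|)^{1/2}))$.
\end{itemize}
These cover sets are independent of $S$, so no decoupling is needed. Balancing the two regimes at $2^{\ell_0}\approx p|I|$ yields $|I|\approx p^{-3/2}$ up to logarithms. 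Note that the fingerprints at the crossover have size about $p^{-1}$, not $p^{-1/2}$.
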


To prove Theorem~\ref{thm:main}, we generalize Theorem~\ref{thm:Alon-Pham-indep} to the hypergraph setting. As discussed earlier, we also handle asymmetric cases, which leads us to consider the \emph{$k$-partite hole number} instead of the independence number. 
The $k$-partite hole number of $\Cay_k^+(\Gamma; S)$, denoted by $\alpha_k(\Cay_k^+(\Gamma; S))$, is defined as the largest integer $m$ for which there exist subsets $U_1, \dots, U_k \subseteq \Gamma$, each of size $m$, such that there is no edge $\{a_1, \dots, a_k\}$ in $\Cay_k^+(\Gamma; S)$ with $a_i \in U_i$ for every $i \in [k]$. From its definition, it is clear that $\alpha(\Cay_k^+(\Gamma; S)) \leq \alpha_k(\Cay_k^+(\Gamma; S))$. We then show that random Cayley sum $k$-graphs typically have small $k$-partite hole number, as stated below. This result generalizes and slightly strengthens Theorem~\ref{thm:Alon-Pham-indep}.

\begin{theorem}\label{thm:ind}
    Let $\Gamma$ be a finite abelian group of order $n$ and $p \in (0, 1/2]$ be a real number. Then for each $k \geq 2$, we have
    $$
        \alpha_k(\Cay_k^+(\Gamma; p)) \leq \widetilde{O}\left( p^{-(2k-1)/(2k-2)} \right)
    $$
    with high probability.
\end{theorem}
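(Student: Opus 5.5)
Set $m = C p^{-(2k-1)/(2k-2)}\log^{c} n$. We need to show that with high probability there are no sets $U_1,\dots,U_k$, each of size $m$, with $(U_1+\cdots+U_k)\cap S=\emptyset$. The naive union bound fails: the probability of missing a fixed sumset $T=U_1+\cdots+U_k$ is $(1-p)^{|T|}\le e^{-p|T|}$, while there are about $n^{km}$ tuples. Following Alon--Pham, we therefore use a container/fingerprint argument. Every sumset is determined up to a small amount of data by a small sub-collection, so we only union bound over fingerprints.

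\textbf{Step 1: reduction to a two-set problem with a small fingerprint.} Fix $U_1,\dots,U_k$ and write $B=U_2+\cdots+U_k$, so that $|B|\ge m$ and $U_1+B$ avoids $S$. Choose random subsets $W_i\subseteq U_i$ of size $s\approx p^{-1/(2k-2)}\log n$ for $i\ge 2$, and let $B'=W_2+\cdots+W_k$. This set has at most $s^{k-1}$ elements and is determined by $(k-1)s$ group elements. Then $U_1+B'\subseteq \Gamma\setminus S$, so $U_1\subseteq X:=\{x: x+B'\subseteq \Gamma\setminus S\}$. Because $S$ is $p$-random, we expect $|X|\approx n(1-p)^{|B'|}$. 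The point is to use $X$ as a container for $U_1$, and symmetrically for each $U_i$, so that the union bound runs over at most $n^{O(ks)}$ choices of containers.

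\textbf{Step 2: the dichotomy on $|B'|$.} If $|B'|\gtrsim s^{k-1}/\mathrm{polylog}$, meaning the sums $W_2+\cdots+W_k$ are nearly distinct, then $|X|\lesssim n e^{-p s^{k-1}}$. Hence $U_1$ lies in a set that is small relative to $n$, and we need the edges inside $X$ to be forced. The better move is to iterate: pass to $U_1+U_2+\cdots$ with $U_1$ confined to $X$, and lower bound $|U_1+B|$ using a small-doubling estimate. If instead $|B'|$ is small for a typical random choice, then the $U_i$ have large additive energy. Balog--Szemerédi--Gowers/Freiman-type structure then gives large pieces $U_i'$ contained in cosets of a subgroup or approximate group $H$ with $|H|\lesssim m\cdot\mathrm{polylog}$. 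In that case the sumset $U_1'+\cdots+U_k'$ has size at least about $m$ inside a translate of $kH$. Such structured sets are few, roughly $n^{O(\log)}$ choices, so the union bound costs only $e^{-pm}$ times $n^{\mathrm{polylog}}$, which is fine once $pm\gg\log^{c} n$.

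\textbf{Step 3: balancing.} In the unstructured case we need $|U_1+\cdots+U_k|\gtrsim m\cdot s^{k-1}$, or at least that the container count $n^{O(ks)}$ is beaten by $e^{-p|U_1+B'|}$. Requiring $p\, m\, s^{k-1}$ (suitably reduced) to exceed $ks\log n$, and checking the relation between $m$, $s$ and $p$, gives the exponent. With $s=p^{-1/(2k-2)}$ we have $m s^{k-1}=C p^{-(2k-1)/(2k-2)}p^{-1/2}$, and the balance $p\cdot|U_1+B'|\ge s\log n$ reproduces Alon--Pham for $k=2$. I would carry out this computation carefully to fix $s$.

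\textbf{Main obstacle.} I expect the main difficulty to be the inverse-theorem step for $k$ sets. We need a quantitative statement of the following form: either random subsamples $W_2,\dots,W_k$ of size $s$ have $|W_2+\cdots+W_k|\ge s^{k-1}/\mathrm{polylog}$, or there is structure strong enough to make the sumset enumerable. My first attempt would be to induct on $k$. Collision counts among $k-1$-fold sums of random samples are controlled by the additive energies $E(U_i,U_j)$. Large energy would be handled by Balog--Szemerédi--Gowers as in Alon--Pham's $k=2$ proof, and small energy yields near-injectivity of the sum map on the samples via a second-moment computation.
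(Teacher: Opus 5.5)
Your proposal has a genuine gap. The overall framework, a union bound over a small family of certificate sets lying inside $U_1+\cdots+U_k$, is right. However, neither branch of Step~2 is carried out, and the parameters of Step~1 cannot work. With $s\approx p^{-1/(2k-2)}\log n$ you get $ps^{k-1}\approx p^{1/2}(\log n)^{k-1}=o(1)$ for small $p$, and this defeats every version of Step~1:
\begin{itemize}
\item The container $X=\{x:x+B'\subseteq\Gamma\setminus S\}$ has $\EE|X|=n(1-p)^{|B'|}\ge n(1-p)^{s^{k-1}}=(1-o(1))n$, so it does not confine $U_1$ at all.
\item Counting $U_1$ directly costs $\binom{n}{m}\ge e^{m\log(n/m)}$, which $e^{-p|U_1+B'|}\ge e^{-pms^{k-1}}$ cannot beat.
\item A pure fingerprint bound over $(W_1,\dots,W_k)$ costs $n^{ks}$, so it needs $|W_1+\cdots+W_k|\gtrsim s\log n/p$. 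But $|W_1+\cdots+W_k|\le s^k=(ps^{k-1})\,s/p=o(s/p)$.
\end{itemize}
For $k=2$ the same computation gives $ps=p^{1/2}\log n=o(1)$, so this does not reproduce Alon--Pham either.

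Even with larger samples, random sampling alone cannot beat $m\approx p^{-k/(k-1)}$. For progression-like $U_i$ the sampled sumset has size at most $|U_1+\cdots+U_k|\approx km$. This is exactly what the count in Lemma~\ref{lem:cov2} gives at level $\ell=0$. In the paper, $p^{-1/(2k-2)}$ is not a sample size. It is the threshold $2^{\ell_0}$ on the popularity level $\ell$ from Lemma~\ref{lem:pig}.

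The structured branch does not supply what the union bound needs either. Balog--Szemer\'edi--Gowers and Freiman-type theorems lose factors polynomial (at best quasi-polynomial) in the doubling constant. Here the doubling can be a power of $p^{-1}$, so these theorems cannot give $\widetilde{O}$ bounds. They also produce a structure \emph{containing} large pieces $U_i'$, whereas you need a set \emph{contained in} $U_1+\cdots+U_k$, drawn from a family small enough to be beaten by $(1-p)^{\text{size}}$. Knowing $H$ still leaves $\binom{|H|}{|U_i'|}$ choices, so your count of ``$n^{O(\log n)}$ structured sumsets'' is unsupported. Intermediate doubling also falls between your two cases.

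The missing idea is the Fourier-sparse approximation:
\begin{itemize}
\item First pigeonhole a level $\ell$ with $|\mathbf{A}_\ell|\gtrsim 2^\ell m/(\ell+1)^2$.
\item For $\ell<\ell_0$, the paper approximates $U_1*\cdots*U_k$ uniformly, to within $O(2^{-\ell}\delta)$, by an average of $O(2^{2\ell}\log n)$ characters (Lemma~\ref{lem:FSA}). This is efficient because H\"older and Parseval give $\delta=\sum_\chi\prod_j|\hat{U_j}(\chi)|\le (m/n)^{k-1}$, the peak value of the convolution (Lemma~\ref{lem:lp}). Superlevel sets of these approximants form a family of size $\exp(O(2^{2\ell}\log^2 n))$ whose members lie between $\mathbf{A}_\ell$ and the sumset (Lemma~\ref{lem:cov1}).
\item For $\ell\ge\ell_0$, it samples $B_i\subseteq U_i$ of size about $(2^\ell m\log n)^{1/k}$, so that each $x\in\mathbf{A}_\ell$ keeps $\Theta(\log n)$ representations. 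This uses Janson, with a Paley--Zygmund partial cover when $\ell$ is very large.
\end{itemize}
Balancing both costs, $2^{2\ell}\log^2 n$ and $(2^\ell m)^{1/k}$, against $p2^\ell m$ at $2^{\ell_0}\approx p^{-1/(2k-2)}$ yields $m\approx p^{-(2k-1)/(2k-2)}$, with no inverse theorem needed.
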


\paragraph{Remark.} In \cite{BLM}, independently, Balogh,  Lee, and Methuku proved similar results.

\paragraph{Organization.} In Theorem~\ref{sec:pre}, we briefly demonstrate the approach of \cite{AH25-arxiv} to the proof of \Cref{thm:prev,thm:Alon-Pham-indep} and introduce some basic tools that will be used throughout the paper. In Section~\ref{sec:lb}, we prove the lower bound of Theorem~\ref{thm:main} via the probabilistic method. Then, we develop the higher uniformity version of the covering lemma, which leads to the proof of the upper bound of Theorem~\ref{thm:main}.


\section{Preliminaries}\label{sec:pre}
Throughout the remaining paper, we assume $\Gamma$ to be a finite abelian group of order $n$, and all logarithms are taken to base~$2$.


\subsection{Probabilistic inequalities}\label{subsec:prob-ineq}

In the proof of our main theorems, we use several concentration inequalities. The following is Hoeffding's inequality.

\begin{lemma}[Hoeffding's inequality~\cite{H63}]\label{lem:hoeffding}
    Let $X_1, \dots, X_m$ be a collection of independent random variables such that $a_i \leq X_i \leq b_i$ for each $i \in [m]$ and define $X \defeq \sum_{i \in [m]}X_i$. Then for any positive real number $t$, the following holds.
    \[
        \PP[X-\EE[X]\geq t] \leq \exp \left( -\frac{2t^2}{\sum_{i \in [m]}(b_i-a_i)^2} \right).
    \]
\end{lemma}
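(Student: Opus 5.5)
Hoeffding's inequality is classical, so I would give the standard Chernoff-type (exponential moment) proof. The plan is to bound the moment generating function of each centred summand, multiply these bounds using independence, and then optimise the exponent.

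First, for any $\lambda > 0$, Markov's inequality applied to $e^{\lambda(X-\EE[X])}$ gives
\[
\PP[X-\EE[X]\geq t] \leq e^{-\lambda t}\,\EE\bigl[e^{\lambda(X-\EE[X])}\bigr] = e^{-\lambda t}\prod_{i\in[m]}\EE\bigl[e^{\lambda(X_i-\EE[X_i])}\bigr].
\]
The equality uses the independence of the $X_i$. This reduces the problem to a single-variable estimate.

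The key step, and the only nontrivial one, is Hoeffding's lemma. It states that if $Y$ satisfies $\EE[Y]=0$ and $a\le Y\le b$, then $\EE[e^{\lambda Y}]\le e^{\lambda^2(b-a)^2/8}$. I would apply it to $Y=X_i-\EE[X_i]$, which lies in an interval of length $b_i-a_i$. To prove it, I would proceed as follows.
\begin{enumerate}
\item By convexity of $y\mapsto e^{\lambda y}$ on $[a,b]$, we have $e^{\lambda y}\le \frac{b-y}{b-a}e^{\lambda a}+\frac{y-a}{b-a}e^{\lambda b}$.
\item Taking expectations and using $\EE[Y]=0$ gives $\EE[e^{\lambda Y}]\le (1-\theta)e^{\lambda a}+\theta e^{\lambda b}$, where $\theta=-a/(b-a)\in[0,1]$.
\item With $u=\lambda(b-a)$, write the right-hand side as $e^{L(u)}$, where $L(u)=-\theta u+\log(1-\theta+\theta e^{u})$.
\item Check that $L(0)=L'(0)=0$ and that $L''(u)=q(1-q)\le 1/4$ for $q=\theta e^u/(1-\theta+\theta e^u)$.
\item Taylor's theorem then yields $L(u)\le u^2/8$.
\end{enumerate}
I expect this calculus verification to be the main obstacle, though it is routine.

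Combining the two steps gives $\PP[X-\EE[X]\ge t]\le \exp\bigl(-\lambda t+\tfrac{\lambda^2}{8}\sum_i(b_i-a_i)^2\bigr)$ for every $\lambda>0$. Choosing $\lambda=4t/\sum_i(b_i-a_i)^2$ minimises the exponent and gives exactly $\exp\bigl(-2t^2/\sum_i(b_i-a_i)^2\bigr)$, as claimed.
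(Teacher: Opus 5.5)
Your proposal is correct and is the standard argument: Markov's inequality applied to $e^{\lambda(X-\EE[X])}$, factorisation by independence, Hoeffding's lemma via convexity and $L''(u)=q(1-q)\le 1/4$, and the choice $\lambda=4t/\sum_{i}(b_i-a_i)^2$. The paper gives no proof of its own and simply cites Hoeffding's original paper, whose argument is essentially this one; the only detail you skip is the degenerate case $b_i=a_i$ (where $\theta$ is undefined), but there $X_i$ is almost surely constant and its factor is $1$, so nothing changes.
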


In Section~\ref{sec:proof}, we use the following version of Janson's inequality and Paley-Zygmund inequality stated in \cite{AS16} to find a probabilistic cover in the sparse regime. 

\begin{lemma}[Janson's Inequality]\label{lem:jan}
    Let $\Omega$ be a finite set and $A_i$ $(i \in I)$ be subsets of $\Omega$. Define $R$ as a random subset of $\Omega$ constructed by choosing each element $x$ of $\Omega$ with probability $p_x$. Define $B_i$ as the event $ A_i \subseteq R $ for each $i \in I$ and $X_i$ as the indicator random variable for $B_i$. Also, say that $i \sim j$ if and only if both $i \neq j$ and $A_i \cap A_j \neq \phi$ holds. Then for $X \defeq \sum X_i$, $\mu \defeq \EE [X]$, and $\Delta \defeq \sum_{i \sim j}\PP [B_i\and B_j]$, the following holds.
    \begin{enumerate}
        \item If $\PP [B_i] \le \epsilon$ for each $i \in I$, 
        \[
        \PP\left[\bigwedge_{i \in I}B_i^c\right] \le \exp{(-\mu + \Delta/2)}.
        \]
        \item If $\Delta \ge \mu$,
        \[
        \PP \left[\bigwedge_{i \in I}B_i^c \right] \le \exp{(-\mu^2 / 2\Delta)}.
        \]
        
    \end{enumerate}
    
\end{lemma}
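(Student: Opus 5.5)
We will follow the standard proof from Alon and Spencer~\cite{AS16}: first establish part (1) by a sequential conditioning argument combined with the Harris (FKG) inequality, and then deduce part (2) from part (1) by passing to a random subfamily of the index set.

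\textbf{Part (1).} Fix an ordering of $I$ and write
\[
\PP\Bigl[\bigwedge_{i\in I} B_i^c\Bigr]=\prod_{i\in I}\PP\Bigl[B_i^c \Bigm| \bigwedge_{j<i} B_j^c\Bigr].
\]
The goal is to show that $\PP[B_i \mid \bigwedge_{j<i}B_j^c] \ge \PP[B_i] - \sum_{j<i,\, j\sim i}\PP[B_i\wedge B_j]$ for each $i$.

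\begin{enumerate}
\item Split the conditioning event into $D_1=\bigwedge_{j<i,\, j\sim i}B_j^c$ and $D_0=\bigwedge_{j<i,\, j\not\sim i}B_j^c$.
\item Use $\PP[B_i\mid D_1\wedge D_0]\ge \PP[B_i\wedge D_1\mid D_0]=\PP[B_i\mid D_0]\,\PP[D_1\mid B_i\wedge D_0]$.
\item Observe that $B_i$ is independent of $D_0$, because $D_0$ depends only on coordinates outside $A_i$. Hence $\PP[B_i\mid D_0]=\PP[B_i]$.
\item Condition on $B_i$, which fixes the coordinates in $A_i$. On the remaining product space, $D_1$ and $D_0$ are both decreasing events, so the Harris inequality gives $\PP[D_1\mid B_i\wedge D_0]\ge \PP[D_1\mid B_i]$.
\item A union bound then gives $\PP[D_1\mid B_i]\ge 1-\sum_{j<i,\, j\sim i}\PP[B_j\mid B_i]$.
\end{enumerate}
Multiplying through yields the claimed lower bound on $\PP[B_i \mid \bigwedge_{j<i}B_j^c]$. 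Using $1-x\le e^{-x}$, we get
\[
\PP\Bigl[\bigwedge_{i\in I} B_i^c\Bigr]\le \exp\Bigl(-\sum_{i\in I}\PP[B_i]+\sum_{i}\sum_{j<i,\, j\sim i}\PP[B_i\wedge B_j]\Bigr).
\]
Since $\Delta$ sums over ordered pairs, the double sum equals $\Delta/2$, which gives the bound $\exp(-\mu+\Delta/2)$. This bound does not actually use the hypothesis $\PP[B_i]\le\epsilon$.

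\textbf{Part (2).} Take a random subfamily $I'\subseteq I$ that keeps each index independently with probability $q\in(0,1]$. Write $\mu_{I'}$ and $\Delta_{I'}$ for the quantities of part (1) computed over $I'$ only. Then $\EE[\mu_{I'}]=q\mu$ and $\EE[\Delta_{I'}]=q^2\Delta$, so there is some choice of $I'$ with $-\mu_{I'}+\Delta_{I'}/2\le -q\mu+q^2\Delta/2$. Because $\bigwedge_{i\in I}B_i^c\subseteq\bigwedge_{i\in I'}B_i^c$, part (1) applied to this $I'$ bounds the probability by $\exp(-q\mu+q^2\Delta/2)$. Choosing $q=\mu/\Delta$, which is at most $1$ because $\Delta\ge\mu$, gives $\exp(-\mu^2/2\Delta)$.

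The main obstacle is step 4 of part (1): the correlation step. One must check carefully that, after conditioning on $B_i$, the events $D_1$ and $D_0$ are still decreasing on the remaining independent coordinates, so that the Harris inequality legitimately applies.
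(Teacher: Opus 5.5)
Your proof is correct and is the standard argument from Alon and Spencer~\cite{AS16}; the paper only cites that source and gives no proof of its own. For part (1) you use sequential conditioning plus the Harris inequality, applying Harris to $D_1$ and $D_0$ before the union bound, whereas Alon--Spencer take the union bound first and then use $\PP[B_j \mid B_i \wedge D_0] \le \PP[B_j \mid B_i]$; this ordering makes no difference. For part (2) you use the random-subfamily argument with $q=\mu/\Delta$, as they do. You are also right that the hypothesis $\PP[B_i]\le\epsilon$ plays no role in the bound $\exp(-\mu+\Delta/2)$, and that $\Delta$ must be read as a sum over ordered pairs for the factor $1/2$ to be correct.
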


\begin{lemma}[Paley--Zygmund Inequality]\label{lem:pzy}
    Let $Z$ be a nonnegative random variable with finite variance. Then for any $\theta \in [0,1 ]$, 
    \[
    \PP [Z>\theta \EE (Z)] \ge (1-\theta)^2 \frac{\EE[Z]^2}{\EE [Z^2]}.
    \]
    In particular, we have
    $$
        \PP[Z > 0] \geq \frac{\EE[Z]^2}{\EE [Z^2]}.
    $$
    
\end{lemma}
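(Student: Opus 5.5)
This is the classical second-moment bound, and I would prove it directly with a single application of the Cauchy--Schwarz inequality. First I would dispose of the degenerate case. If $\EE[Z]=0$, then the right-hand side is $0$ (read $0/0$ as $0$, or note that $Z=0$ almost surely), and there is nothing to prove. So assume $\EE[Z]>0$. Then $\EE[Z^2]\ge \EE[Z]^2>0$, the ratio is well defined, and finite variance guarantees $\EE[Z^2]<\infty$.

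Next I would fix $\theta\in[0,1]$ and split $Z$ according to the event $E\defeq\{Z>\theta\EE[Z]\}$. Write
\[
\EE[Z]=\EE[Z\mathbf{1}_{E^c}]+\EE[Z\mathbf{1}_{E}].
\]
For the first term, $Z\le\theta\EE[Z]$ on $E^c$ and $Z\ge 0$, so $\EE[Z\mathbf{1}_{E^c}]\le\theta\EE[Z]$. For the second term, Cauchy--Schwarz gives
\[
\EE[Z\mathbf{1}_E]\le\sqrt{\EE[Z^2]\,\EE[\mathbf{1}_E^2]}=\sqrt{\EE[Z^2]\,\PP[E]}.
\]
Combining the two bounds yields
\[
(1-\theta)\EE[Z]\le\sqrt{\EE[Z^2]\,\PP[E]}.
\]
Both sides are nonnegative because $\theta\le1$. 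Squaring and dividing by $\EE[Z^2]>0$ gives
\[
\PP[E]\ge(1-\theta)^2\,\frac{\EE[Z]^2}{\EE[Z^2]},
\]
which is the first claim.

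The ``in particular'' statement is the case $\theta=0$, since then $E=\{Z>0\}$. There is no real obstacle in this argument. The only points needing care are the nonnegativity of $Z$, which is used in bounding the contribution on $E^c$, and the sign condition $1-\theta\ge0$, which is what allows the inequality to be squared.
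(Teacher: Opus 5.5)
Your Cauchy--Schwarz argument is correct and complete: splitting $\EE[Z]$ over $\{Z>\theta\EE[Z]\}$ and its complement, bounding the second piece by $\sqrt{\EE[Z^2]\,\PP[Z>\theta\EE[Z]]}$, and squaring using $1-\theta\ge 0$ is exactly right, and the degenerate case $\EE[Z]=0$ is handled properly. The paper gives no proof of this lemma (it is quoted from Alon and Spencer), and your argument is the standard proof found there.
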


In Section~\ref{sec:lb}, we use another version of Janson's inequality stated in \cite{Y22} that bounds lower-tail probability to bound the edge count of a random induced subhypergraph from below. For additional information on Janson's inequality, see also \cite{AS16}.

\begin{lemma}\label{lem:jan2}
    Under the same assumptions as in Lemma~\ref{lem:jan}, 
    \[
    \PP [X \le \mu - t] \le \exp \left(- \frac{t^2}{2(\mu + \Delta)} \right)
    \]
    holds for any $t \in [0, \mu]$.
\end{lemma}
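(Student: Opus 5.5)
The plan is the standard exponential-moment argument for the lower tail, in the style of Janson--{\L}uczak--Ruci\'nski, using the Harris (FKG) inequality for the product measure on $2^\Omega$. I would set $p_i \defeq \PP[B_i]$ and, for $\lambda \ge 0$, $\Psi(\lambda) \defeq \EE[e^{-\lambda X}]$. The main work is a differential inequality for $\log \Psi$. Integrating it and applying Markov's inequality to $e^{-\lambda X}$ will give the tail bound.

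\textbf{Step 1 (differential inequality).} We have $-\Psi'(\lambda) = \sum_i \EE[X_i e^{-\lambda X}] = \sum_i p_i\, \EE[e^{-\lambda X}\mid B_i]$. Fix $i$ and split $X = Y_i + Z_i$, where $Y_i \defeq \sum_{j:\, j=i \text{ or } j\sim i} X_j$ and $Z_i$ collects the remaining indicators.
\begin{itemize}
\item Every $j$ counted in $Z_i$ has $A_j\cap A_i=\emptyset$. Hence $Z_i$ is independent of $B_i$.
\item Conditioning on $B_i$ simply fixes the coordinates in $A_i$, so the conditional measure is still a product measure.
\item Both $e^{-\lambda Y_i}$ and $e^{-\lambda Z_i}$ are decreasing functions of $R$. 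Harris' inequality then gives
$$\EE[e^{-\lambda X}\mid B_i] \ge \EE[e^{-\lambda Y_i}\mid B_i]\,\EE[e^{-\lambda Z_i}] \ge \EE[e^{-\lambda Y_i}\mid B_i]\,\Psi(\lambda),$$
where the last step uses $Z_i \le X$.
\item Jensen's inequality gives $\EE[e^{-\lambda Y_i}\mid B_i]\ge \exp(-\lambda\,\EE[Y_i\mid B_i])$.
\end{itemize}
Next apply Jensen once more, to the convex function $\exp$ with weights $p_i/\mu$. Since $\sum_i p_i\EE[Y_i\mid B_i] = \sum_i\sum_{j=i\text{ or } j\sim i}\PP[B_i\wedge B_j] = \mu+\Delta$, we obtain
$$-(\log\Psi)'(\lambda) \ge \mu\exp\!\left(-\lambda\frac{\mu+\Delta}{\mu}\right).$$
Here $\Delta$ is summed over ordered pairs, exactly as in Lemma~\ref{lem:jan}.

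\textbf{Step 2 (integrate and optimize).} Integrating from $0$ to $\lambda$, with $\Psi(0)=1$, gives
$$\log \Psi(\lambda) \le -\frac{\mu^2}{\mu+\Delta}\bigl(1-e^{-\lambda(\mu+\Delta)/\mu}\bigr).$$
Using $1-e^{-x}\ge x - x^2/2$ turns this into $\log\Psi(\lambda)\le -\lambda\mu + \lambda^2(\mu+\Delta)/2$. Markov's inequality then yields
$$\PP[X\le \mu-t] \le e^{\lambda(\mu-t)}\Psi(\lambda) \le \exp\!\left(-\lambda t + \tfrac{\lambda^2}{2}(\mu+\Delta)\right).$$
Choosing $\lambda = t/(\mu+\Delta)$ gives the claimed bound $\exp(-t^2/(2(\mu+\Delta)))$. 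The restriction $t\in[0,\mu]$ is only needed so that the event is meaningful, and $\lambda\ge0$ holds automatically.

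\textbf{Main obstacle.} The only delicate point is Step 1's correlation step. One must check that conditioning on $B_i$ preserves the product structure, so that Harris' inequality applies. One must also check that $Z_i$ is genuinely independent of $B_i$; this is exactly where the definition of $\sim$ enters. The remaining steps are routine calculus.
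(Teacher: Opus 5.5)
Your proof is correct. It is the standard Janson--{\L}uczak--Ruci\'nski exponential-moment argument: Harris' inequality on the conditioned product measure, Jensen twice, integration of the bound on $-(\log\Psi)'$, and Markov with $\lambda = t/(\mu+\Delta)$. Your ordered-pair normalization of $\Delta$ matches Lemma~\ref{lem:jan}. The paper does not prove this lemma itself but quotes it from \cite{Y22} (cf.\ \cite{AS16}), and your argument is essentially the proof behind that cited result.
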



\subsection{Discrete Fourier Analysis}\label{subsec:Disc-FA}

Both of the proofs of Theorem~\ref{thm:prev} and our main theorems use the Fourier analytic method. Hence, we summarize the properties of the discrete Fourier analysis that we need. 

Let $\Gamma$ be a finite abelian group, and $\hat{\Gamma}$ denote its dual group. Given a set $S \subseteq \Gamma$, denote its characteristic function as $S$.

\begin{definition}
    For a function $f: \Gamma \rightarrow \mathbb{C},$ its \emph{Fourier transform} $\hat{f} : \hat{\Gamma} \rightarrow \mathbb{C}$ is defined as 
    $$
        \hat{f}(\chi) \defeq \EE_{x \in \Gamma}f(x)\chi(x).
    $$
\end{definition}

\begin{definition}
    Let $f, g: \Gamma \rightarrow \mathbb{C}.$ Define the \emph{convolution} as $(f*g)(x) := \EE_y[f(y)g(x-y)].$
\end{definition}

We state some basic identities of Fourier analysis in finite abelian groups below.

\begin{proposition}\label{prop:fourier}
    Let $f, g: \Gamma \rightarrow \mathbb{C}$. Then the following identities hold for all $x\in \Gamma$ and $\chi \in \hat{\Gamma}$.
    \begin{enumerate}
        \item $\widehat{f*g}(\chi) = \hat{f}(\chi) \hat{g}(\chi)$ [Convolution identity] 
        \item $f(x)=\sum_{\chi}\hat{f}(\chi)\chi(-x)$ [Fourier inversion formula]
        \item $\EE_{x\in G}\abs{f(x)}^2 = \sum_{\chi \in \hat{\Gamma}} \abs{\hat{f}(\chi)}^2  $ [Parseval identity]
    \end{enumerate}
\end{proposition}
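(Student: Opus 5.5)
All three identities follow from direct computation once we have the orthogonality relation for characters. For every $z \in \Gamma$,
\[
\sum_{\chi \in \hat{\Gamma}} \chi(z) = n \cdot \mathbf{1}[z = 0],
\]
and dually $\EE_{x \in \Gamma} \chi(x) = \mathbf{1}[\chi \text{ trivial}]$. We also use the standard facts that $|\hat{\Gamma}| = n$, that $\chi(x+y) = \chi(x)\chi(y)$, and that $\overline{\chi(x)} = \chi(-x)$ (since $|\chi(x)| = 1$). So the plan is to establish orthogonality first and then derive (1)--(3) by swapping finite sums and expectations.

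\textbf{Orthogonality.} For the sum over characters, the case $z = 0$ is immediate, since every character takes the value $1$ there. If $z \neq 0$, suppose some $\chi_0 \in \hat{\Gamma}$ satisfies $\chi_0(z) \neq 1$. Since $\chi \mapsto \chi_0\chi$ is a bijection of $\hat{\Gamma}$, we get
\[
\sum_\chi \chi(z) = \sum_\chi (\chi_0\chi)(z) = \chi_0(z) \sum_\chi \chi(z),
\]
which forces the sum to vanish. The expectation over $x$ is handled the same way, by translating $x \mapsto x + x_0$ with $\chi(x_0) \neq 1$.

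The step I expect to be the main obstacle is the existence of a character separating $z$ from $0$, together with $|\hat{\Gamma}| = n$. I would handle it with the structure theorem, writing $\Gamma \cong \ZZ_{m_1} \times \cdots \times \ZZ_{m_r}$. The characters are then exactly
\[
x \mapsto \exp\Bigl(2\pi i \sum_j t_j x_j / m_j\Bigr), \qquad t_j \in \ZZ_{m_j}.
\]
This gives $n$ characters. If $z$ has a nonzero coordinate $z_j$, choosing $t_j = 1$ and all other $t_\ell = 0$ gives a character with $\chi(z) \neq 1$.

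\textbf{Convolution identity.} Expand and substitute $u = x - y$:
\[
\widehat{f*g}(\chi) = \EE_x \EE_y f(y) g(x-y) \chi(x) = \EE_y f(y)\chi(y) \, \EE_x g(x-y)\chi(x-y).
\]
Here we used $\chi(x) = \chi(y)\chi(x-y)$. The inner expectation equals $\hat{g}(\chi)$ by translation invariance of $\EE_x$, and the outer one then equals $\hat{f}(\chi)$.

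\textbf{Inversion formula.} Expand $\hat{f}$ and use $\chi(y)\chi(-x) = \chi(y-x)$:
\[
\sum_\chi \hat{f}(\chi)\chi(-x) = \EE_y f(y) \sum_\chi \chi(y-x).
\]
By orthogonality this equals $\EE_y f(y)\, n\,\mathbf{1}[y = x] = f(x)$.

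\textbf{Parseval identity.} Expand $|\hat{f}(\chi)|^2 = \hat{f}(\chi)\overline{\hat{f}(\chi)}$ using $\overline{\chi(y)} = \chi(-y)$:
\[
\sum_\chi |\hat{f}(\chi)|^2 = \EE_{x,y} f(x)\overline{f(y)} \sum_\chi \chi(x-y).
\]
By orthogonality the inner sum is $n\,\mathbf{1}[x = y]$. The factor $n$ cancels one of the two $1/n$ normalizations in $\EE_{x,y}$, leaving $\EE_x |f(x)|^2$, as required.
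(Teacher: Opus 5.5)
Your proof is correct. With the paper's normalizations, $\hat{f}(\chi)=\EE_x f(x)\chi(x)$ and $(f*g)(x)=\EE_y f(y)g(x-y)$, all three computations go through. In the inversion formula and in Parseval, the factor $n$ from $\sum_\chi \chi(z)=n\,\mathbf{1}[z=0]$ cancels the $1/n$ normalizations, as you say.

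The paper gives no proof of this proposition; it lists these identities as standard background. Your derivation is the usual textbook argument: character orthogonality, using the structure theorem to get $|\hat{\Gamma}|=n$ and a character separating any $z\neq 0$ from $0$.

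The dual relation $\EE_x\chi(x)=\mathbf{1}[\chi \text{ trivial}]$ that you mention is never used in any of the three identities.
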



\subsection{Approach of Alon--Pham}\label{subsec:AP-approach}

Independent sets in random Cayley graphs are \emph{clustered} in the sense that they are not usually disjoint from all other independent sets of the same size. Alon and Pham established this phenomenon in \cite{AH25-arxiv} and provided Theorem~\ref{thm:prev} as an application of this result. In this subsection, we explain their approach and present some main obstructions to extending their method to the hypergraph setting. 

We say that a family $\calV'$ is a \emph{cover} of a given family $\calV$ if each $V\in \calV$ contains an element $V' \in \calV'$ as a subset, referring to the terminology in \cite{AH25-arxiv}. As mentioned in the previous subsection, the main result of \cite{AH25-arxiv} is the existence of a cover for the collection $ \calS \defeq \{ A + A: A \subseteq \Gamma\}$ of small complexity consisting of sets that are dense in each $S$ in some sense. The existence of such covers allows us to obtain a sharper upper bound for the $p$ that a random Cayley (sum) graph generated by a $p$-random subset contains having a large independence number with high probability. Below, we include a brief demonstration of the idea that Alon and Pham~\cite{AH25-arxiv} used to find such covers.

For a subset $A \subseteq \Gamma$, we identify $A$ with its characteristic function.
The basic idea is to split into two cases depending on whether a subset $A\subseteq \Gamma$ has a small doubling constant. If $A$ has a small doubling constant, due to its rich additive structure, the convolution of the characteristic function, $A*A$ can be efficiently approximated by a sum of a small number of characters of $\Gamma$. This was proven by using discrete Fourier analysis. Then this approximation provides a sufficiently strong cover for sets of small doubling. Otherwise, if $A$ has a large doubling constant, the aforementioned approximation is inefficient. Hence, in this case, they claim that one can choose $A' \subseteq A$ such that its subset plays the role of a cover by sampling $A'$ from $A$ randomly.

There are several difficulties in extending the argument of Alon and Pham to $k$-fold sumsets and Cayley sum $k$-graphs. The first one is, in the step where sumsets are approximated by sums of characters, the case $k>2$ lacks an $L_k$ analogue of Parseval's identity, and there is also no asymmetric version of Parseval's identity. As a result, the approximation lemma \cite[Lemma~9]{AH25-arxiv} cannot be extended directly to our setting. We overcome this difficulty by establishing sufficiently strong inequalities for sums of products in the frequency domain, which work both in the asymmetric case and in the $L_k$ setting.

The second difficulty arises in the probabilistic argument used to obtain an efficient cover. In the work of Alon and Pham, the denseness of elements in the cover with respect to some $S \in \calS$ is shown by exploiting the fact that each of them contains a set that encodes part of the additive structure of $S = A+A$. The proof of the containment in the case of large doubling relies heavily on the fact that a Cayley sum graph generated by a single element is a matching. This structure guarantees independence of relevant random variables related to the additive structure of a randomly chosen set with respect to $A+A$. However, this is not the case in the higher uniformity setting. Instead, we discard a constant fraction of elements reflecting the additive structure of $A+A$. This does no harm to the denseness of the elements in the cover while significantly increasing the probability that the random sample becomes a valid element in the cover. To this end, we use several concentration inequalities such as Janson's inequality and Paley--Zygmund inequality. In this way, we obtain an efficient cover in both regimes, thereby extending the methodology of Alon and Pham to the hypergraph setting.

\section{Large sets are $k$-fold sumsets}\label{sec:lb}

Answering an analogue of Green's question in the higher uniformity setting, we prove the lower bound of Theorem~\ref{thm:main}. This generalizes a result from \cite{A07}, which says that any subset of $\ZZ_n$ of size at least $n-\Omega((n/ \log n)^{1/2})$ is a sumset for sufficiently large prime $n$. The result in \cite{A07} is shown by proving that if a set $X$ is sufficiently large, the probability that a sumset of a small random set being $X$ is nonzero.  We extend this method to a higher uniformity setting to prove an analogous result for $k$-fold sumsets.

\begin{proof}[Proof of the lower bound of Theorem~\ref{thm:main}]
     As the case when $k=2$ is proved by Alon in \cite{A07}, we assume $k$ to be at least $3$. Let $F \subseteq \Gamma$ be a set with $\abs{F} \le \frac{1}{4000e^3k^{2k+6}} \big(\frac{n}{(\log n)^{k}}\big)^{\frac{1}{k}} $. To prove the theorem, we find such $A$ from a collection of random sets whose probability of any $k$-fold sum of its elements in the set lying in $\Gamma \backslash F$ is maximized. Let $C$ be a random set constructed by choosing each element of $S \defeq \{ a : ka \notin F \} \subset \Gamma$ independently with probability $p=\big(\frac{20k^{k+3}\log n}{n^{k-1}}\big)^\frac{1}{k}$ and define $A$ as $C\backslash X$, while $X$ is defined as below. Note that $\abs{\{ a : ka \notin F \}}$ is equal to $n-\abs{F}$ as $x \mapsto kx$ is an injection by our assumption on the order of elements in $\Gamma$.
     \[
     X \defeq \{ a \in C : \exists a_1, \cdots ,a_{k-1} \in C \text{ such that }a+a_1+\cdots+a_{k-1}\in F \}. 
     \]
    
     By definition of $A$, we observe that no sum of $k$ elements in $A$ is in $F$. If $x \in A$, no $(k-1)$-tuple $(a_1, \cdots, a_{k-1})$ in $C^{k-1}$ satisfies $x+a_1+\cdots + a_{k-1}\in F$. This proves $kA \subseteq \Gamma \backslash F$. Now it suffices to show that $\Gamma \backslash F \subseteq kA$ holds with positive probability. To this end, we show that the probability of $x \notin kA$ for each $x \in \Gamma \backslash F$ is $o(1/n)$. For each $g \in \Gamma $, consider the Cayley sum hypergraph $\calH_g$ with vertex set $V(\calH_g) \defeq \Gamma $ with edge set $E(\calH_g)\defeq \{ \{x_1,\cdots, x_k\} : x_1+\cdots +x_k = g, x_1, \cdots , x_k \in \Gamma \}$. Note that we have at least $\binom{n}{k-1}\geq \frac{n^{k-1}}{(k-1)^{k-1}}$ many such solutions for the linear equation. An additional observation is that the number of edges intersecting $\Gamma \backslash S$ is negligible, and this guarantees that the hypergraph $\calH_g[S]$ has at least $k^{-k} n^{k-1}$ edges. 

    \begin{claim}\label{claim:lb-claim1}
        For each $g \in \Gamma \backslash F$, there exist more than $10k^3\log n$ pairs $(a_1^{(i)}, \cdots , a_k^{(i)}) \in C^k$ such that $a_1^{(i)}+\cdots +a_k^{(i)} = g $ with probability at least $1-o(1/n)$. 
    \end{claim}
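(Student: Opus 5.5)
We will apply the lower-tail Janson inequality (Lemma~\ref{lem:jan2}) to the hypergraph $\calH_g[S]$. Fix $g \in \Gamma\setminus F$. Let the index set $I$ be the edge set of $\calH_g[S]$. To make the edges genuine sets, we keep only those edges whose $k$ vertices are pairwise distinct. The multisets with a repeated vertex number $O(k^2 n^{k-2})$, which is negligible, so at least $\tfrac12 k^{-k}n^{k-1}$ edges survive for large $n$. For each surviving edge $e$, let $B_e$ be the event $e\subseteq C$, and set $X=\sum_e X_e$. Each such event produces a tuple in $C^k$ summing to $g$, so it suffices to show $X>10k^3\log n$ with probability $1-o(1/n)$.

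First we compute $\mu=\EE[X]\ge \tfrac12 k^{-k}n^{k-1}p^k$. Substituting $p^k = 20k^{k+3}\log n / n^{k-1}$ gives
\[
\mu\ge 10k^3\log n\cdot\frac{20k^{k+3}}{20k^{k+3}}=10k^{3}\log n .
\]
This is only a hair above the target, so we will tighten the count to get slack. The simplest route is to use that $\calH_g[S]$ has at least $(1-o(1))n^{k-1}/k!$ edges with distinct vertices (ordered solutions divided by $k!$). Comparing with $k^{-k}$, we have $k!\le k^k/2$ for $k\ge3$, which gives $\mu\ge (2-o(1))\cdot 10k^3\log n\ge 20k^3\log n\cdot(1-o(1))$. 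We will then take $t=\mu/2$, or alternatively $t=\mu-10k^3\log n$.

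Next we bound $\Delta=\sum_{e\sim f}\PP[B_e\wedge B_f]$ by grouping pairs according to $j=|e\cap f|\in[1,k-1]$. Two edges sharing $j$ vertices and both summing to $g$ are determined by $e$ (at most $n^{k-1}$ choices), the choice of the shared $j$-set, and $k-1-j$ further free vertices of $f$; the last vertex of $f$ is forced by the linear equation. The number of such pairs is therefore $O_k(n^{k-1}\cdot n^{k-1-j})$, each with probability $p^{2k-j}$. So the $j$-th term is
\[
O_k\bigl(n^{2k-2-j}p^{2k-j}\bigr)=O_k\bigl(\mu^2\cdot n^{j}p^{j}\cdot n^{-j}\bigr)=O_k\bigl(\mu^2 (np)^j n^{-j}\bigr)\cdot\ldots,
\]
and, more directly, $n^{2k-2-j}p^{2k-j}=\mu^2/(n^{j}p^{j})\cdot O(1)$. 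Since $np = n^{1/k}(20k^{k+3}\log n)^{1/k}\to\infty$ polynomially, we get $\Delta = o(\mu)$, in fact $\Delta\le \mu\cdot O(\mu/(np))=o(\mu)$ because $\mu=O(\log n)$.

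Applying Lemma~\ref{lem:jan2} with $t=\mu/2$ then gives
\[
\PP[X\le \mu/2]\le \exp\bigl(-\mu^2/(8(\mu+\Delta))\bigr)=\exp\bigl(-(1-o(1))\mu/8\bigr).
\]
For this to be $o(1/n)$ we need $\mu/8>\log_e n$, i.e.\ $\mu\gtrsim 8\ln n$. Since $\mu\ge 10k^3\log n$ with $k\ge 3$, this holds comfortably.

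The main obstacle is the bookkeeping of constants: $\mu/2$ must still exceed $10k^3\log n$. This needs $\mu\ge 20k^3\log n$, which is where the sharper $n^{k-1}/k!$ edge count, rather than the paper's stated $k^{-k}n^{k-1}$, is used. If that slack proves insufficient, we will instead take $t=\mu-10k^3\log n$ and use $\Delta=o(\mu)$ together with any constant-factor room in the edge count. The Janson exponent remains of order $\mu\gg\ln n$ either way.
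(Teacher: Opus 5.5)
Your proposal is correct and follows the paper's route: the lower-tail Janson inequality (Lemma~\ref{lem:jan2}) applied to the number of edges of $\calH_g[S]$ spanned by $C$, with $\mu=\Theta(k^3\log n)$ and $\Delta$ negligible. The paper takes $\mu=20k^3\log n$ directly from the $k^{-k}n^{k-1}$ edge count and uses $t=10k^3\log n$. You also have a small arithmetic slip in your favour: $n^{k-1}p^k/k!\ge 2k^{-k}n^{k-1}p^k=40k^3\log n$, so $\mu\ge(1-o(1))\,40k^3\log n$, and $t=\mu/2$ already clears $10k^3\log n$ with room; your fallback $t=\mu-10k^3\log n$ works as well.
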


    \begin{claimproof}[Proof of Claim~\ref{claim:lb-claim1}]
         Denote $\calH_g[S]$ as $H$. Consider the random variable $Y \defeq e(H[C])$, and for each $e\in E(H)$, let $C_e$ be an indicator variable such that $C_e = 1$ if $e\in E(H[C])$ and zero otherwise. We note that $Y= \sum_{e\in E(H)} C_e$. As the number of edges in a subgraph induced on a $p$-random set is monotone with respect to the number of edges in $\calH_g$, it suffices to bound the lower-tail probability for the case when the edge count is exactly $k^{-k}n^{k-1}$. We prove the claim by applying Lemma~\ref{lem:jan} to this setting. Denote $e \sim e'$ if $e$ and $e'$ are distinct edges that are not vertex-disjoint. Since each vertex is chosen independently at random with probability $p$, by the linearity of the expectation, we have
        \begin{equation}\label{eq:exp-Y}
            \EE[Y] = e(H) \cdot p^k = k^{-k} n^{k-1} p^k \ge 20 k^3 \log n.
        \end{equation}

        Also, $\Delta \defeq \sum_{e\sim e'}\EE [X_e X_{e'}]$ is computed as
        \allowdisplaybreaks
        \begin{align*}
            \Delta & \le 20k^3 \log n+ \sum_{i\in [k-2]} n^i \cdot \left( n^{k-1-i} \right)^2 \cdot p^{2k-i}
            \\ & = 20k^3 \log n+ \frac{1}{n^2} \sum_{i\in [k-2]} (np)^{2k-i}
            \\ & \le 20k^3 \log n+ (k-2)n^{2k-3}p^{2k-1}
            \\ & \le 20k^3 \log n+ (k-2)(20k^{k+3} \log n )^{(2k-1)/k} n^{-1/k}
            \\ &\leq 20k^3 \log n+ n^{-1/2k}.
        \end{align*}

        By Lemma~\ref{lem:jan2}, the probability of the event that the subhypergraph of $\calH_g$ induced on $X$ has less than $10k^3 \log n$ edges is bounded as
        \begin{align*}
            \PP [Y \le 10k^3 \log n] \le \exp \left(-\frac{(10k^3 \log n)^2}{80k^3 \log n} \right) \le \exp \left(-\frac{5}{4}k^3 \log n \right) \le o\left( \frac{1}{n^2} \right).
        \end{align*}
        
        Thus, there exist more than $10k^3\log n$ edges in $\calH_g [C]$ with probability at least $1-1/n^2$. This completes the proof.
    \end{claimproof}

As we want to bound the number of edges in $\calH_g[C]$ deleted by excluding elements from $X$, we prove the following probability estimates. 

    \begin{claim}\label{clm:3-statements}
        Fix $x\in \Gamma$. For each $g\in \Gamma \backslash F$, the following holds.
        \begin{enumerate}
            \item The probability of $C$ having more than $k^2$ distinct $(2k-2)$-tuples $(y_i, z_{i,1},\dots,z_{i,(k-1)}, $ $w_{i,1}, \dots,w_{i,(k-2)})$ such that $y_{i}+z_{i,1}+\cdots +z_{i,(k-1)}=g$ and $x+y_i+w_{i,1}+\cdots+w_{i,(k-2)}$ is in $F$ is $o(1/n^2)$.
            \item The probability of $C$ having more than $k^2$ distinct $(2k-2)$-tuples $(y_i, z_{i,1},\dots,z_{i,(k-1)},$ $ w_{i,1},\dots,w_{i,(k-2)})$ such that $x+z_{i,1}+\cdots +z_{i,(k-1)}=g$ and $y_i+x+w_{i,1}+\cdots+w_{i,(k-2)}$ is in $F$ is $o(1/n^2)$.
            \item The probability of $C$ having more than $k^2$ distinct $(2k-2)$-tuples $(y_i,y_i',x, z_{i,1},\dots,z_{i,(k-2)}, $ $ w_{i,1},\dots,w_{i,(k-2)})$ such that $y_i'+x+ z_{i,1}+\cdots +z_{i,(k-2)}=g$ and $y_i+x+w_{i,1}+\cdots+w_{i,(k-2)}$ is in $F$ is $o(1/n^2)$.
        \end{enumerate}
    \end{claim}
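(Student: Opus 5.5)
The plan is a first-moment computation followed by a disjointness (sunflower-type) reduction, in the spirit of Alon's argument in \cite{A07}. Fix $x$ and $g$. Each of the three configurations is a pair of linear relations among elements of $C$ that share the vertex $x$ and, in (1) and (3), also a vertex $y_i$. In (1) the relations are $y+z_1+\dots+z_{k-1}=g$ and $x+y+w_1+\dots+w_{k-2}\in F$. So we count tuples of $2k-2$ new vertices satisfying one equation and one membership in $F$.

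\textbf{Step 1: expected number of tuples.} The number of such tuples in $\Gamma^{2k-2}$ is at most $n^{k-2}\cdot |F|\cdot n^{k-2}$. Indeed, choose $w_1,\dots,w_{k-2}$ freely and an element $f\in F$; this fixes $y$. Then choose $z_1,\dots,z_{k-2}$ freely, which fixes $z_{k-1}$. Each tuple lies in $C$ with probability $p^{2k-2}$, assuming its entries are distinct; degenerate tuples only lower the count of free choices and are handled in the same way. The expected number of tuples is therefore
\[
\mu \le |F|\, n^{2k-4} p^{2k-2}.
\]
With $p^k=\Theta_k(\log n/n^{k-1})$ we have $n^{2k-4}p^{2k-2}=\widetilde\Theta(n^{2k-4-(k-1)(2k-2)/k})=\widetilde\Theta(n^{-2+2/k})$. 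Since $|F|\le \widetilde O(n^{1/k})$, this gives $\mu\le n^{-2+3/k}\,\mathrm{polylog}(n)=n^{-\Omega(1)}$ for $k\ge 3$. Cases (2) and (3) are analogous: in (2) $x$ is fixed inside the sum equal to $g$, and in (3) the pair $(y,y')$ plays the role of $y$. The parameter count gives the same or a smaller power of $n$ times $|F|$.

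\textbf{Step 2: from many tuples to many disjoint ones.} A Markov bound on the count alone only gives $O(\mu)$, which is not $o(1/n^2)$. So the main step is a combinatorial reduction. Suppose $C$ contains more than $k^2$ distinct tuples. Then either some $m=\Omega(\log n)$ of them, or a constant number chosen suitably, form a family that is pairwise vertex-disjoint outside $x$; otherwise many tuples share a common vertex. For disjoint tuples, independence gives a probability of at most $\mu^m/m!\le n^{-\Omega(m)}$. With $m$ a large enough constant depending on $k$ (here the threshold $k^2$ is used), this is $o(1/n^2)$.

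\textbf{Step 3: the overlapping case (main obstacle).} If the tuples overlap heavily, say they share a vertex $v$, condition on $v\in C$, which costs a factor $p$. Then repeat the count with one fewer free vertex. This amounts to a Kim--Vu/Spencer style bound on the maximum number of disjoint extensions. One must check for each overlap pattern that the expected number of extensions, $|F| n^{j}p^{j+1}$ for the appropriate $j$, is still $n^{-\Omega(1)}$, so the same disjoint-family bound applies recursively. Equivalently, one can apply Janson's inequality (Lemma~\ref{lem:jan2}, upper tail via disjoint families) or an Erdős--Tetali type lemma, giving $\PP[\text{more than }k^2\text{ tuples}]\le \sum_{\text{patterns}} \mu_{\text{pattern}}^{c}$. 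The delicate part is verifying, uniformly over all intersection patterns with the fixed $x$ and the fixed values $g$ and $F$, that each conditional expectation is polynomially small. This holds when $|F|p^{\,\cdot}\,n^{\,\cdot}$ has a negative exponent, which the hypothesis on $|F|$ is designed to ensure. The constant $k^2$ then gives enough disjoint structures to push the total probability below $o(1/n^2)$.
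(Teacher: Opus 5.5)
Your plan is organized differently from the paper's. The paper makes a single first-moment count: it union-bounds over collections of $k^2$ tuples, grouped by the number $k^2(2k-2)-j$ of distinct elements they span. It uses a greedy estimate of at most $n^{2k-4-j'}|F|$ tuples with $j'$ prescribed coordinates, so every coincidence trades a factor $n$ for a factor $p^{-1}$. Your disjoint-family-plus-recursion scheme is the more standard way to organize such an upper-tail bound, but as written it has genuine gaps.

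First, the arithmetic in Step~1 is off. Since $(k-1)(2k-2)/k=2k-4+2/k$, you get $n^{2k-4}p^{2k-2}=n^{-2/k+o(1)}$ (for $k=3$, $n^2p^4=n^{-2/3+o(1)}$). Hence $\mu=\widetilde{O}(n^{-1/k})$, not $n^{-2+3/k}$. The bound $\mu^m/m!$ therefore beats $n^{-2}$ only for $m\ge 2k+1$. Step~2 does not produce that many disjoint tuples from $k^2+1$ tuples. If a maximal disjoint subfamily has at most $2k$ members, the remaining tuples merely meet a set of at most $2k(2k-2)>k^2$ elements. Pigeonhole then only gives a vertex lying in two tuples, and nothing connects the threshold $k^2$ to your argument. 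Also, Lemma~\ref{lem:jan2} is a lower-tail inequality and is of no use for this upper tail.

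The decisive gap is Step~3. It is not true that every overlap pattern has a polynomially small conditional expectation. A new tuple that uses only elements already present (e.g.\ a permutation of the $z$'s or of the $w$'s of a tuple already found) costs no factor of $p$, and its conditional count is a constant $\ge 1$. This is not a removable technicality:
\begin{itemize}
\item A single configuration of $2k-2$ distinct elements of $C$ yields $(k-1)!\,(k-2)!$ distinct tuples, which exceeds $k^2$ once $k\ge 5$. Such a configuration is present with probability of order $|F|\,n^{2k-4}p^{2k-2}\ge n^{-2/k+o(1)}$ for every nonempty $F$, far above $n^{-2}$.
\item For $k=4$, two configurations sharing $y,w_1,w_2$ with disjoint $z$-triples already give $2\cdot 3!\cdot 2!=24>16$ tuples. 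This happens with probability $n^{-3/4+o(1)}$ or more.
\end{itemize}
So, read with ordered tuples, statement~(1) is false for $k\ge 4$, and no refinement of your plan can prove it. You must first count configurations up to these symmetries. Then you must actually carry out the pattern-by-pattern extension analysis, with the threshold dictated by what that analysis gives. The paper's greedy estimate has the same blind spot: a reordered tuple has $j'=2k-2$ prescribed coordinates, yet it is one tuple rather than $n^{-2}|F|<1$ of them. So the paper's argument does not supply the missing step either.
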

    
    \begin{claimproof}[Proof of Claim~\ref{clm:3-statements}] 
        Since the proof is completely symmetric across the cases, we omit the proofs of $(2)$ and $(3)$ and provide only $(1)$. Denote by $X_g$ the event such there exists more than $k^2$ distinct $(2k-2)$ tuples $(y_i, z_{i,1},\dots,z_{i,(k-1)}, $ $w_{i,1}, \dots,w_{i,(k-2)})$ such that
        \begin{equation}\label{eq:2k-2-tuple}
            y_{i}+z_{i,1}+\cdots +z_{i,(k-1)}=g \text{, } \ x+y_i+w'_{i,1}+\cdots+w'_{i,(k-2)} \in F
        \end{equation}

        holds. Then, denote by $X_g^{j}$ the event that there exists $k^2$ distinct such $(2k-2)$-tuples on $k^2(2k-2)-j$ distinct vertices. We bound the probability $\PP[X_g]$ by taking a union bound over $X_g^{j}$ for all $0\leq j \leq k^2(2k-2)$. 
        Note that the number of the $(2k-2)$-tuple of $\Gamma^{2k-2}$ that satisfies \eqref{eq:2k-2-tuple} and some of the $j'$ coordinates is fixed by some elements of $\Gamma$ is at most $n^{2k-4 - j'}|F|$. Thus, by double counting and greedily choosing the next tuple, the number of possible collections of size $k^2$ of $(2k-2)$-tuples that satisfy \eqref{eq:2k-2-tuple} and cover exactly $k^2(2k-2)-j$ vertices is at most $n^{k^2(2k-4)-j}|F|^{k^2}$. Then by the union bound, we have
        
        $$
            \PP[X_g] \le \sum_{j=0}^{k^2(2k-2)} \PP[X_g^{j}] \le \sum_{j=0}^{k^2(2k-2)}  n^{k^2(2k-4)-j}|F|^{k^2} p^{k^2(2k-2)-j}= o\left( \frac{1}{n^{k-1}}\right).
        $$

        As $k\ge 3$, this concludes the proof of the claim.

    \end{claimproof}

\begin{claim}\label{claim:lb-claim2}
    For each $g\in \Gamma$, the probability that there exists a $(2k-1)$-tuple $(x,y,z_1,\dots,z_{k-1},w_1,\dots $ $, w_{k-2})$ satisfying $x+y+w_1+\dots+w_{k-2} \in F$ and 
    $ y+z_1+\dots+z_{k-1}=g$ that intersects more than $3k^2(2k-1)\leq 6k^3$ other such tuples is $o(1/n)$.
\end{claim}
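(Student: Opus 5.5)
The plan is a first-moment argument over small configurations, in the same spirit as Claim~\ref{clm:3-statements}. Call a $(2k-1)$-tuple $T=(x,y,z_1,\dots,z_{k-1},w_1,\dots,w_{k-2})$ a \emph{bad tuple} if all its coordinates lie in $C$, $x+y+w_1+\dots+w_{k-2}\in F$, and $y+z_1+\dots+z_{k-1}=g$. Define the \emph{structure} $\calT$ to be a bad tuple $T_0$ together with a collection of $m \defeq 3k^2(2k-1)+1$ further distinct bad tuples, each sharing a vertex with $T_0$. If the event in the statement occurs, some such structure is present in $C$. So it suffices to bound the expected number of structures appearing in $C$ by $o(1/n)$ and then apply Markov's inequality.

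The key counting input is the following. Consider a bad tuple that must share at least $j'\ge 1$ prescribed vertices with an already-chosen vertex set, and that introduces $t$ new vertices. The two linear constraints each remove roughly one degree of freedom: the condition $y+\sum z_i=g$ removes a factor of $n$, and $x+y+\sum w_i\in F$ replaces a factor of $n$ by $|F|$. Hence the number of such tuples is at most $O(n^{t-2}|F|)$ when both constraints still involve a free new coordinate. When the fixed coordinates already determine one of the constraints, the count is $O(n^{t-1})$ or $O(n^{t})$, but in those cases fewer new vertices are paid for.

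Next, I would enumerate structures by their total number of distinct vertices $v$, in the manner of $X_g^j$ in Claim~\ref{clm:3-statements}. Choose $T_0$ first, giving at most $n^{2k-3}|F|$ choices with factor $p^{2k-1}$. Then add the remaining tuples greedily, each sharing at least one vertex with the current union. Each added tuple contributes a factor of at most about $\max\bigl(n^{t-2}|F|p^{t},\ n^{t-1}p^{t},\ (np)^{t}\bigr)$ with $t\le 2k-2$. Plugging in $p=\Theta((\log n/n^{k-1})^{1/k})$ and $|F|\le c\,(n/\log^k n)^{1/k}$, we have $np\approx n^{1/k}$ up to logarithmic factors. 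Then $n^{t-2}|F|p^t \le n^{-2+1/k}(np)^{t}$ and, for $t\le 2k-2$, this is small, roughly $n^{-1/k}$. Likewise $(np)^t p$ per constrained step is small, because the vertices that are fully free along an edge of $\calH_g$ cost $n^{k-2}p^{k-1}\approx n^{-1+1/k}$. So each attached tuple contributes a factor $n^{-\Omega(1/k)}$ up to polylogarithmic terms. The base $T_0$ contributes $n^{2k-3}|F|p^{2k-1}\approx n^{O(1)}$, namely polylog times $n^{(2k-3)+1/k-(2k-1)(k-1)/k}=n^{-1+2/k}$ or so. The product over $m\approx 6k^3$ attached tuples then beats $n^{-1}$ comfortably, and the union over the $O_k(1)$ overlap patterns only costs a constant.

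The main obstacle is the case analysis in the counting step. When an attached tuple overlaps $T_0$ in a way that makes a constraint already satisfied, for instance sharing all of $y,z_1,\dots,z_{k-1}$ and differing only in the $x,w$ part, the saving factor changes. I would handle this as in Claim~\ref{clm:3-statements}: I would verify that in every overlap pattern the gain per new vertex, $n\cdot p\approx n^{1/k}$, is outweighed by the loss from each constraint, $1/n$ or $|F|/n$, whenever the new tuple has at least one new vertex. This reduces to checking $(np)^{t}\le n^{t/k}$ against $n^{-1}$ or $n^{-1}|F|$ for $t\le k-1$ per constraint block. If some pattern fails to give a polynomial saving, I would fall back on the observation that $m$ is large: even a saving of $n^{-1/(2k)}$ per tuple over $6k^3$ tuples suffices.
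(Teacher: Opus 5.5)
\textbf{The claimed per-tuple saving fails at the overlap pattern you flagged, and there is no power saving to fall back on.}

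Let $T=(x',y,z_1,\dots,z_{k-1},w'_1,\dots,w'_{k-2})$ reuse the whole $g$-part of $T_0$ and have $t=k-1$ fresh coordinates. The constraint $y+\sum z_i=g$ is then automatically satisfied. So there are $n^{k-2}|F|$ such $T$, and the factor is $\lambda\defeq n^{k-2}|F|p^{k-1}$.

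Take $p=(20k^{k+3}\log n/n^{k-1})^{1/k}$ and $|F|=\Theta(n^{1/k}/\log n)$, the allowed maximum. The exponent of $n$ in $\lambda$ is $k-2+\frac1k-\frac{(k-1)^2}{k}=0$, so $\lambda=\Theta((\log n)^{-1/k})$. Your test of $(np)^t$ against $|F|/n$ returns exactly $n^{0}$ at $t=k-1$. There is no polynomial saving, so the fallback of $n^{-1/(2k)}$ per tuple is unavailable.

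Your base count is also off: $n^{2k-3}|F|p^{2k-1}=\Theta((\log n)^{(k-1)/k})$, since the exponent of $n$ is $0$, not $-1+2/k$. Hence the expected number of structures made of $T_0$ and $m$ such tuples is of order $(\log n)^{(k-1-m)/k}$, far from $o(1/n)$.

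Your case analysis also omits attached tuples with $t=0$. If tuples are ordered, the $((k-1)!)^2-1$ coordinate permutations of $T_0$ are such tuples: permute the $z_i$ among themselves, and permute $x,w_1,\dots,w_{k-2}$ among themselves. They give no saving, and they already outnumber $3k^2(2k-1)$ once $k\ge 6$.

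\textbf{Sharper counting cannot repair this, because the stated $o(1/n)$ bound is itself false for $|F|$ near the maximum.} Consider an edge $\{y,z_1,\dots,z_{k-1}\}$ of $\calH_g[C]$ together with $M\defeq 3k^2(2k-1)+2$ pairwise disjoint sets $\{x_i,w_{i,1},\dots,w_{i,k-2}\}\subseteq C$ satisfying $x_i+y+\sum_j w_{i,j}\in F$. The resulting $M$ tuples all share $y,z_1,\dots,z_{k-1}$. So one of them meets $M-1>3k^2(2k-1)$ others, and the event of the claim occurs.

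The expected number of such configurations is $\Theta(\log n)\cdot\Theta(\lambda^M)=\Theta((\log n)^{1-M/k})$. The overlap terms in the second moment are of lower order, so Paley--Zygmund gives probability $\Omega((\log n)^{1-M/k})$, which is not $o(1/n)$.

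\textbf{The paper's proof does not escape this either.} It goes through Claim~\ref{clm:3-statements}, whose count $n^{2k-4-j'}|F|$ is too small by a factor $n$ when the fixed coordinates include a whole $g$-part. In part (2), with the fixed vertex in the role of $y$, some $z_1,\dots,z_{k-1}\in C$ complete it to sum $g$ with probability about $n^{k-2}p^{k-1}=n^{-1/k+o(1)}$. Each further tuple reusing those $z_i$ then costs only $\lambda$. So that event has probability $n^{-1/k+o(1)}$, not $o(n^{-2})$.

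What the final step of the lower-bound argument actually needs is a bound on the number of edges of $\calH_g[C]$ that meet $X$. Many tuples through one such edge destroy only that edge. A correct argument should control that quantity rather than the number of intersecting tuples.
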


\begin{claimproof}[Proof of Claim~\ref{claim:lb-claim2}]
    Given $x', g' \in \Gamma$, denote by $Z_{x',g'}^{(1)}$ the event that $(1)$ in Claim~\ref{clm:3-statements} holds for the choice of $x=x'$ and $g=g'$. Also, denote the corresponding events for $(2)$ and $(3)$ of Claim~\ref{clm:3-statements} as $Z_{x',g'}^{(2)}$ and $Z_{x',g'}^{(3)}$, respectively. We apply the union bound twice. First, take a union bound over the events $Z_{x',g'}^{(i)}$ $(i \in [3])$. This proves that the probability of each element in a given tuple being contained in more than $3k^2$ other such tuples is $o(1/n^2)$. Then taking the union bound on the event $\vee_{i \in [3]}Z_{x',g'}^{(i)}$ for all $x'\in \Gamma$ proves the claimed bound.

\end{claimproof}

We now bound the number of disjoint such pairs.

    \begin{claim}\label{claim:lb-claim3}
       For each $g \in \Gamma \backslash F$, the probability of $C$ containing more than $s= \log n$ pairwise disjoint tuples $(x_i, y_i, z_{i,1},\dots,z_{i,(k-1)},w_{i,1},\dots ,w_{i,(k-2)})$ $(i \in [s])$ such that $y_i+z_{i,1}+\dots +z_{i, (k-1)}=g$ and $x_i+y_i+w_{i,1}+\dots +w_{i,(k-2)}\in F$ for each $i \in [s]$ is $o(1/n)$.
     \end{claim}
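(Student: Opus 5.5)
We use a first-moment (union bound) argument over collections of $s=\log n$ pairwise disjoint tuples. For fixed $g\in\Gamma\setminus F$, call a $(2k-1)$-tuple $T=(x,y,z_1,\dots,z_{k-1},w_1,\dots,w_{k-2})$ \emph{bad} if $y+z_1+\dots+z_{k-1}=g$ and $x+y+w_1+\dots+w_{k-2}\in F$. The event in Claim~\ref{claim:lb-claim3} requires the coordinates of $s$ bad tuples with pairwise disjoint vertex sets to all lie in $C$.

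First we count bad tuples. Choose $y$ and $z_1,\dots,z_{k-2}$ freely; then $z_{k-1}$ is determined by the equation summing to $g$. Choose $w_1,\dots,w_{k-2}$ freely and an element $f\in F$; then $x$ is determined. This gives at most $n^{1+(k-2)+(k-2)}|F| = n^{2k-3}|F|$ bad tuples. Each bad tuple has $2k-1$ coordinates, but coordinates may coincide, so a tuple may span fewer distinct vertices. We therefore first handle tuples with all coordinates distinct, where the probability of lying in $C$ is $p^{2k-1}$. The expected number of bad tuples in $C$ is then at most
\[
\mu \defeq n^{2k-3}|F|\,p^{2k-1}.
\]
With $p^k = 20k^{k+3}\log n/n^{k-1}$ and $|F|\le c\,(n/(\log n)^k)^{1/k}$ for the tiny constant $c$ in the hypothesis, we get $p^{2k-1}=\Theta\bigl((\log n)^{(2k-1)/k}\, n^{-(k-1)(2k-1)/k}\bigr)$. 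Hence
\[
\mu \le c'\, n^{2k-3-(k-1)(2k-1)/k+1/k}(\log n)^{(2k-1)/k-1} = c'\,n^{0}(\log n)^{(k-1)/k},
\]
since $2k-3-(2k^2-3k+1)/k+1/k = 0$. So $\mu\le c'(\log n)^{1-1/k}$, where $c'$ is a small constant coming from the constant in $|F|$.

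Since the $s$ tuples are vertex-disjoint, their events of lying in $C$ are independent. The probability that $C$ contains $s$ pairwise disjoint bad tuples is therefore at most
\[
\binom{N}{s}p^{(2k-1)s}\le \frac{\mu^s}{s!}\le \Bigl(\frac{e\mu}{s}\Bigr)^{s},
\]
where $N$ is the number of bad tuples. With $s=\log n$ and $\mu=O((\log n)^{1-1/k})$, this is at most $(e c' (\log n)^{-1/k})^{\log n} = n^{-\Omega(\log\log n)} = o(1/n)$.

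Degenerate tuples, with repeated coordinates, are handled by the same computation. A tuple spanning $2k-1-j$ distinct vertices has $j$ fewer free coordinates, so there are at most $n^{2k-3-j}|F|$ of them, and each lies in $C$ with probability $p^{2k-1-j}$. Since $np\ge 1$ (indeed $np\to\infty$), each coincidence loses a factor $np\ge1$, so the contribution is smaller. Summing over the $O_k(1)$ coincidence patterns only multiplies $\mu$ by a constant. One point needs care: if a coincidence forces $x$ or $z_{k-1}$ rather than a free coordinate, the count could change. We check, as in Claim~\ref{clm:3-statements}, that fixing $j'$ coordinates still leaves at most $n^{2k-3-j'}|F|$ choices, because the two linear constraints remain independent of the free variables.

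The main obstacle is the exponent bookkeeping showing that $\mu$ is only polylogarithmic, so that $(e\mu/s)^s$ decays faster than $1/n$. This is exactly where the choice of $|F|\approx n^{1/k}$ and $p\approx n^{-(k-1)/k}$ is tight. If the bound came out as $\mu\asymp\log n$ rather than $o(\log n)$, we would instead take $s=C\log n$ with a large constant $C$, or use the small constant in $|F|$ to make $e\mu/s<1/e$, which still yields $o(1/n)$.
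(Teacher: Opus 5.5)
Your argument is the same first-moment bound the paper uses. You count the at most $n^{2k-3}\abs{F}$ bad tuples, bound the probability of $s$ disjoint ones in $C$ by $\binom{N}{s}p^{(2k-1)s}\le (e\mu/s)^s$, and check that $\mu$ is polylogarithmic. For tuples with distinct entries your computation is correct, and slightly sharper than the paper's: $\mu=O((\log n)^{1-1/k})$, so any $O_k(1)$ factor from summing over patterns is harmless.

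The gap is in the degenerate case, which the paper also disposes of in one sentence (``significantly smaller''). You assert that a coincidence pattern on $2k-1-j$ vertices has at most $n^{2k-3-j}\abs{F}$ realizations, ``because the two linear constraints remain independent''. This is false. After merging coordinates, the two constraints can become congruent modulo a prime $q\mid\abs{\Gamma}$ (necessarily $q\nmid k$). When $\Gamma$ has large $q$-torsion, the second constraint then costs almost nothing.

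Here is a concrete instance. Let $k=3$, $\Gamma=\ZZ_2^d\times\ZZ_5$ (no element of order $3$), $F=\{f_0\}$, $g=f_0+(0,1)$, and take the pattern $z_1=z_2=a$, $x=w_1=b$. The constraints become $y+2a=g$ and $y+2b=f_0$, that is, $y=g-2a$ and $2(a-b)=(0,1)$. Since doubling kills the $\ZZ_2^d$-part, this only fixes the $\ZZ_5$-coordinate of $b$. So there are about $n^2/5$ such tuples on $3$ vertices, whereas your bound says $n$. They contribute $\frac{n^2}{5}p^3=\frac{20\cdot 3^6}{5}\log n=2916\log n$ to $\sum_T p^{\abs{V(T)}}$. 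The resulting union bound $\frac{1}{s!}\bigl(\sum_T p^{\abs{V(T)}}\bigr)^s$ is then at least $2916^s$, which is vacuous.

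The claim itself survives, because such tuples are clustered. All of them contain the degenerate edge $\{y,a\}$ of $\calH_g$, of which $C$ contains only $O(np^2)=o(1)$ in expectation, and disjoint tuples have disjoint edge parts. A correct version splits according to whether $(y,z_1,\dots,z_{k-1})$ has a repeated entry.

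\emph{If it has a repeat,} use only this edge part as the witness. A pattern with multiplicities $m_1,\dots,m_{B_1}$, where $B_1\le k-1$, has exactly $n^{B_1-1}$ solutions. Indeed, $v\mapsto\sum_b m_bv_b$ is onto, as $\gcd(m_b)\mid k$ and $\gcd(k,\abs{\Gamma})=1$. So these witnesses have total first moment $O((np)^{k-1}/n)=o(1)$.

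\emph{If it has no repeat,} your count $n^{B-2}\abs{F}$ is valid. The number of solutions for a given $f$ exceeds $n^{B-2}$ only if the two coefficient rows are linearly dependent modulo some prime $q\mid\abs{\Gamma}$. Since both rows sum to $k\not\equiv 0 \pmod q$, this means they are congruent mod $q$. Each of the $k-1$ blocks containing a $z_i$ has first-row entry $1$. So it must contain at least one, hence exactly one, of the $k-1$ elements $x,w_1,\dots,w_{k-2}$. This makes the two rows equal over $\ZZ$ and forces $g\in F$, a contradiction.

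A union bound over $s$ pairwise disjoint witnesses of either kind then gives $\bigl(e\,(O((\log n)^{1-1/k})+o(1))/s\bigr)^s=o(1/n)$.
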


    \begin{claimproof}[Proof of Claim~\ref{claim:lb-claim3}]
    Note that it suffices to estimate the contribution from $(2k-1)$-tuples whose entries are all distinct, as the contribution from degenerate cases is significantly smaller for sufficiently large $n$. Therefore, the probability is bounded as
        \begin{align*}
            \binom{n^{2k-3}|F|}{s}p^{(2k-1)s}\leq \left( \frac{en^{2k-3}|F|p^{2k-1}}{s}\right)^s \leq \left( \frac{e(20k^{3+k})^{2-\frac{1}{k}}\log n}{4000e^3k^{2k+6}\log n} \right)^{\log n}=o\left(\frac{1}{n}\right).
        \end{align*}
    This completes the proof of the claim.
    \end{claimproof}

We are now ready to prove the theorem. Fix $g\in \Gamma$ and consider the $k$-uniform hypergraph $\calH_g$. Define the hypergraph $\calF_C$ as a hypergraph with vertex set $\Gamma$ and edge set 
$$
E(\calF_C)\defeq \{ (b_1, \dots , b_k) \in C^k:b_1+\cdots +b_k \in F\}.
$$
From the last claim, there exist at most $\log n$ pairwise disjoint pairs $(e_1, e_2)\in E(\calH_g) \times E(\calF_C)$ such that $e_1$ and $e_2$ share a vertex with probability $1-o(1/n)$. Since each such pair can be adjacent to at most $6k^3$ pairs with probability $1-o(1/n)$, applying the union bound proves that there exist at most $(6k^3+1)\log n$ such pairs $(e_1, e_2)$ with probability $1-o(1/n)$. Since we have more than $10k^3 \log n$ edges from the first claim, there exists at least one edge in $\calH_g$ after the deleting process. Then each $g\in \Gamma \backslash F $ is in $kA$ with probability at least $1-o(1/n)$. By applying the union bound on all $g\in \Gamma \backslash F$, we conclude that $\Gamma \backslash F \subseteq kA $ holds for some choice of $A$. This proves the theorem.
\end{proof}


\section{Existence of a small non-$k$-fold sumset}\label{sec:proof}

We generalize the covering lemma in \cite{AH25-arxiv} to prove the existence of a small subset of $\Gamma$ that cannot be written as a $k$-fold sumset.

\subsection{Finding a large cover}

We begin by finding a Fourier sparse pointwise approximation of $A_1*\cdots * A_k(x)$ to obtain a cover in the dense regime by establishing a $L_k$-inequality, which will play the role of the Parseval identity in the $2$-uniform case. 

\begin{definition}
    Let $\mathbf{A} = (A_1, \dots, A_k)$ be a $k$-tuple of subsets of $\Gamma$, where $|A_i| = \alpha_i |\Gamma|$. For each nonnegative integer $\ell$, we define $\mathbf{A}_{\ell}$ as
    $$
        \mathbf{A}_{\ell} \defeq \{ x : A_1*\cdots *A_k(x) \in (2^{-\ell-1}\alpha^{\frac{k-1}{k}}, 2^{-\ell}\alpha^{\frac{k-1}{k}}]\}.
    $$
\end{definition}

\begin{lemma}\label{lem:lp}
        Let $k\geq 2$ be a positive integer. Assume that $\alpha_i \in (0,1)$ for all $i \in [k]$ and $|A_i|=\alpha_i n$. Denote $\prod_{i=1}^k\alpha_i$ as $\alpha$. Then $\delta:=\sum_{\chi \in \hat{\Gamma}}{\prod_{j=1}^{k}|\hat{A_j}(\chi)|} \in [\alpha,\alpha^{(k-1)/k}]$.
\end{lemma}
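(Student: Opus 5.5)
The plan is to prove the two bounds on $\delta$ separately. The lower bound will come from the trivial character, and the upper bound from H\"older's inequality combined with Parseval (Proposition~\ref{prop:fourier}(3)) and the sup bound $|\hat{A_j}(\chi)|\le \alpha_j$.

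\emph{Lower bound.} With the normalization $\hat f(\chi)=\EE_x f(x)\chi(x)$, the trivial character $\chi_0$ gives $\hat{A_j}(\chi_0)=\alpha_j$. Every summand in the definition of $\delta$ is nonnegative, so
$$
\delta \ge \prod_{j=1}^k |\hat{A_j}(\chi_0)| = \prod_{j=1}^k \alpha_j = \alpha .
$$

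\emph{Upper bound.} First I record two facts about each $A_j$. Parseval gives $\sum_\chi |\hat{A_j}(\chi)|^2 = \EE_x A_j(x)^2 = \alpha_j$. The triangle inequality gives $|\hat{A_j}(\chi)|\le \alpha_j$ for every $\chi$. Interpolating between them yields, for every real $q\ge 2$,
$$
\sum_\chi |\hat{A_j}(\chi)|^q \le \alpha_j^{q-2}\sum_\chi |\hat{A_j}(\chi)|^2 = \alpha_j^{q-1}.
$$
Next I apply the generalized H\"older inequality with all $k$ exponents equal to $k$:
$$
\delta=\sum_\chi \prod_{j=1}^k|\hat{A_j}(\chi)| \le \prod_{j=1}^k\Big(\sum_\chi |\hat{A_j}(\chi)|^k\Big)^{1/k}.
$$
Since $k\ge 2$, the interpolation bound with $q=k$ applies to each factor, so
$$
\delta \le \prod_{j=1}^k \alpha_j^{(k-1)/k} = \alpha^{(k-1)/k}.
$$
For $k=2$ this is just Cauchy--Schwarz.

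\emph{Main obstacle.} The only real point is choosing the H\"older exponents so that the sum of products splits into single-set $L_k$ norms of the Fourier transforms. Each such norm is then controlled by Parseval together with the $L_\infty$ bound $\alpha_j$. Everything else is routine. As a consistency check, $\alpha\le\alpha^{(k-1)/k}$ holds because $\alpha\in(0,1)$, so the interval $[\alpha,\alpha^{(k-1)/k}]$ is nonempty.
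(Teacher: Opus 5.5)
Your proof is correct. The upper bound is exactly the paper's argument: H\"older with all exponents equal to $k$, then the sup bound $|\hat{A_j}(\chi)|\le\alpha_j$ to reduce the $\ell^k$ sums to $\ell^2$ sums, then Parseval.

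For the lower bound you take a different and shorter route. You keep only the trivial-character term and use nonnegativity of the others. The paper instead runs the chain
\[
\delta=\sum_\chi\bigl|\widehat{A_1*\cdots*A_k}(\chi)\bigr| \ge \Bigl(\sum_\chi \bigl|\widehat{A_1*\cdots*A_k}(\chi)\bigr|^2\Bigr)^{1/2} = \|A_1*\cdots*A_k\|_{L^2}\ge \|A_1*\cdots*A_k\|_{L^1}=\alpha .
\]
This uses the convolution identity, $\ell^1\ge\ell^2$, Parseval, and monotonicity of $L^p$ norms on a probability space. Your version needs none of this. Indeed, the final equality in the paper's chain is itself just the trivial-character evaluation $\widehat{A_1*\cdots*A_k}(\chi_0)=\alpha$.

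What the paper's chain buys is the intermediate bound $\delta\ge\|A_1*\cdots*A_k\|_{L^2}$. By Cauchy--Schwarz this is at least $\alpha\,(n/|A_1+\cdots+A_k|)^{1/2}$, so it can be much larger than $\alpha$ when the sumset is small. However, this extra strength is never used later: the dyadic decomposition in Lemma~\ref{lem:cov1} only relies on the range $\delta\in[\alpha,\alpha^{(k-1)/k}]$.
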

\begin{proof}[Proof of Lemma~\ref{lem:lp}]
Let $A_1, \dots, A_k$ be subsets of $\Gamma$. As the largest size of the Fourier coefficient of $A_i (x)$ is equal to the density of $A_i$ in $\Gamma$, $\delta$ is bounded from above as
\begin{align*}
    \sum_{\chi \in \hat{\Gamma}}\prod_{i \in [k]}|\hat{A_i}(\chi)|  &\le \prod_{i \in [k]} \left( \sum_{\chi \in \hat{\Gamma}} |\hat{A_i}(\chi)|^k \right)^{1/k}  \\
    & \le \prod_{i \in [k]}\left( \sum_{\chi \in \hat{\Gamma}} \alpha_i^{k-2} |\hat{A}_i(\chi)|^2 \right)^{1/k}\\ 
    & \le \alpha^{\frac{k-2}{k}} \prod_{i \in [k]} \left( \sum_{\chi \in \hat{\Gamma}} |\hat{A}_i(\chi)|^2 \right)^{1/k}.
\end{align*}

The second inequality is justified by the fact that the Fourier transform of $A_i$ attains its maximum at the trivial character, which is exactly $\alpha_i$. Using Parseval's identity, we rewrite the product in the last expression using the $L_2$ norm of each $A_i(x)$. As $A_i(x)$ takes only $0$ or $1$ as its values, it is further simplified as the product of $L_1$ norms of $A_1(x), \dots, A_k(x)$. In other words,
\[
      \prod_{i \in [k]} \sum_{\chi \in \hat{\Gamma}} |\hat{A}_i(\chi)|^2 =  \prod_{i \in [k]} \EE_{x \in \Gamma} |A_i(x)|^2  = \prod_{i \in [k]} \alpha_i
    =\alpha. 
\]

Combining this with the previous inequality, we get $\delta \le \alpha^{(k-1)/k}$. It remains to prove the lower bound. Again, by Parseval's identity and the monotonicity of $L_p$ norms on probability measures, it holds that
\allowdisplaybreaks
\begin{align*} 
    \delta = \sum_{\chi\in \hat{\Gamma}}\prod_{i\in [k]}{|\hat{A_i}(\chi)|} & = \sum_{\chi \in \hat{\Gamma}}|\hat{A_1}(\chi)\cdots\hat{A_k}(\chi)| \\
    & \geq \big(\sum_{\chi \in \hat{\Gamma}}|(A_1*\cdots *A_k)^{\wedge}(\chi)|^2\big)^{\frac{1}{2}} \\
    & \geq ||(A_1*\cdots *A_k)^{\wedge}||_{l^2} \\
    & = ||A_1*\cdots *A_k||_{L^2} \\
    & \geq ||A_1*\cdots *A_k||_{L^1} \geq \prod_{i\in [k]} \alpha_i.
\end{align*}

Having the bound on both sides, we conclude the proof of the lemma.
\end{proof}

Next, we prove a pigeonholing lemma that proves the existence of a large set $\mathbf{A}_{\ell}$ whose elements $x\in \mathbf{A}_{\ell}$ are homogeneous in terms of the number of representations $(a_1, \dots, a_k) \in A_1 \times \cdots \times A_k$ such that $a_1+\cdots + a_k=x$.

\begin{lemma}\label{lem:pig}
    There is an absolute constant $c>0$ such that the following holds. Define $\alpha$ as $\prod_{i=1}^{k}\alpha_i$. Let $\mathbf{A}= (A_1, \dots, A_k)$ be a $k$-tuple of subsets of $\Gamma$ with each $A_i$ being of size $\alpha_i n$. If $|A_1+\cdots +A_k|\leq K\alpha^{1/k} n$, there exists $\ell\leq \log_2 K$ such that $|\mathbf{A}_\ell|\geq \frac{c2^ln\alpha^{1/k}}{(\ell+1)^2}$. 
\end{lemma}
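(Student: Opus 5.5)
We run a dyadic pigeonholing on the level sets $\mathbf{A}_\ell$, weighting them by the mass of $f\defeq A_1*\cdots*A_k$.

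\textbf{Step 1: total mass and the cap.} Since $f(x)=\EE_{y_1,\dots,y_{k-1}}A_1(y_1)\cdots A_{k-1}(y_{k-1})A_k(x-y_1-\cdots-y_{k-1})$, we have $\sum_x f(x)=n\alpha$. We also need the pointwise bound $f(x)\le \alpha^{(k-1)/k}$, so that every $x$ with $f(x)>0$ lies in some $\mathbf{A}_\ell$ with $\ell\ge 0$. By the Fourier inversion formula and the convolution identity (Proposition~\ref{prop:fourier}),
$$\abs{f(x)}\le \sum_{\chi\in\hat\Gamma}\prod_{j}\abs{\hat A_j(\chi)}=\delta\le \alpha^{(k-1)/k},$$
by the upper bound of Lemma~\ref{lem:lp}. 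Hence $n\alpha=\sum_{\ell\ge0}\sum_{x\in\mathbf{A}_\ell}f(x)$.

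\textbf{Step 2: the tail $\ell>\log_2 K$ carries at most half the mass.} On $\mathbf{A}_\ell$ we have $f\le 2^{-\ell}\alpha^{(k-1)/k}$, and $\mathbf{A}_\ell\subseteq A_1+\cdots+A_k$. Therefore
$$\sum_{\ell>L}\sum_{x\in\mathbf{A}_\ell}f(x)\le 2^{-L}\alpha^{(k-1)/k}\cdot K\alpha^{1/k}n.$$
Taking $L=\lfloor\log_2 K\rfloor+1$ makes this at most $\tfrac12 n\alpha$. So the levels $0\le\ell\le L$ carry mass at least $\tfrac12 n\alpha$. (One can make the right-hand side small enough to work with $\ell\le\log_2K$ exactly by shifting the threshold by a constant and absorbing it into $c$. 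Alternatively, note that if $K<1$ the hypothesis forces few sums, and the statement should be checked directly.)

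\textbf{Step 3: pigeonhole with weights $(\ell+1)^{-2}$.} Since $\sum_{\ell\ge0}(\ell+1)^{-2}=\pi^2/6<2$, if for every $\ell\le L$ we had
$$\sum_{x\in\mathbf{A}_\ell}f(x)<\frac{3}{2\pi^2}\cdot\frac{n\alpha}{(\ell+1)^2},$$
then summing over $\ell$ would contradict Step 2 (for instance by using $\tfrac{1}{4}\cdot 6/\pi^2$ and comparing). So some $\ell\le L$ satisfies
$$\sum_{x\in\mathbf{A}_\ell}f(x)\ge \frac{c'\,n\alpha}{(\ell+1)^2}.$$
Since $f\le 2^{-\ell}\alpha^{(k-1)/k}$ on $\mathbf{A}_\ell$, this gives
$$\abs{\mathbf{A}_\ell}\ge \frac{c'\,2^{\ell}n\alpha^{1/k}}{(\ell+1)^2},$$
as required.

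\textbf{Main obstacle.} The substantive input is the pointwise cap $f\le\alpha^{(k-1)/k}$, which is exactly where Lemma~\ref{lem:lp} replaces Parseval. The remaining delicacy is purely bookkeeping: matching the range $\ell\le\log_2K$ (versus $\log_2K+O(1)$) by adjusting constants. I would handle this by shrinking $c$, or by noting that for $\ell=\lceil\log_2K\rceil$ the tail bound already gives at most $n\alpha$ up to a constant factor. If necessary I would replace $\tfrac12$ by a smaller constant and split at $L=\log_2K-1$, redistributing accordingly.
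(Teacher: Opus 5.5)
Your argument is the paper's: split $\sum_x f(x)=n\alpha$ over the dyadic levels, control the deep levels using $\abs{A_1+\cdots+A_k}\le K\alpha^{1/k}n$, and pigeonhole the remaining mass with weights $(\ell+1)^{-2}$. Steps~1 and~3 are fine. The cap $f\le\alpha^{(k-1)/k}$ is even elementary: $n^{k-1}f(x)\le\prod_{j\neq i}\abs{A_j}$ for each $i$, so $f(x)\le\alpha/\max_i\alpha_i\le\alpha^{(k-1)/k}$. Also $K\ge1$ is automatic, since $\abs{A_1+\cdots+A_k}\ge\max_i\abs{A_i}\ge\alpha^{1/k}n$. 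Step~2, however, is wrong as written. With your bound $2^{-L}$ and $L=\lfloor\log_2K\rfloor+1$ one has $2^{-L}K\in[\tfrac12,1)$, so the tail is not shown to be at most $\tfrac12 n\alpha$. You need the sharper fact that $f\le 2^{-(L+1)}\alpha^{(k-1)/k}$ on the levels $\ell\ge L+1$. Taking $L=\lceil\log_2K\rceil$, the tail is then at most $2^{-(L+1)}Kn\alpha\le\tfrac12 n\alpha$, and Step~3 produces a good level with $\ell\le\lceil\log_2K\rceil$.

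What you cannot do is push this to $\ell\le\log_2K$ by shrinking $c$; splitting at a smaller $L$ only makes the tail bound worse. For $K$ not a power of $2$, the statement as written is false. Take $k=2$, $\Gamma=\mathbb{Z}_2^d$, a subgroup $H$, and $A_1=A_2=A=\bigcup_{i=1}^{t}(p_i+H)$ with $t=2^{m+1}$ and $p_1,\dots,p_t$ linearly independent modulo $H$. Then $A*A=\alpha^{1/2}$ on $H$ (level $0$) and $A*A=2^{-m}\alpha^{1/2}$ on the $\binom{t}{2}$ cosets $p_i+p_j+H$ (level $m$), while $\abs{A+A}/\abs{A}=2^m-\tfrac12+2^{-m-1}<2^m$. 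With $K=\abs{A+A}/\abs{A}$, the only nonempty level with $\ell\le\log_2K$ is $\mathbf{A}_0=H$, of size $2^{-m-1}n\alpha^{1/2}$. So no absolute $c$ works once $2^{-m-1}<c$. The paper's proof has the same blind spot: its tail term $2^{-(\log K+1)}\alpha^{(k-1)/k}K\alpha^{1/k}n$ tacitly treats $\log_2K$ as an integer. The correct conclusion is $\ell\le\lceil\log_2K\rceil$ (equivalently, round $K$ up to a power of $2$). This is all that is used later, where only $\ell=O(\log n)$ matters.
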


\begin{proof}[Proof of Lemma~\ref{lem:pig}]
    For the sake of contradiction, assume that there does not exist an integer $1\le \ell \le \log_2 K$ such that $\mathbf{A}_\ell$ is popular. Note that $\EE _{x\in \Gamma}A_1*\cdots*A_k(x) = \alpha_1 \cdots \alpha_k$. Then with an appropriate choice of $c$, we have 
    \begin{align*}
        \sum_{x\in \Gamma}A_1*\cdots *A_k(x) & = n \prod_{i\in [k]} \alpha_i\\ 
        &< \sum_{\ell\leq \log K} 2^{-\ell}\alpha^{(k-1)/k}\cdot \frac{c2^{\ell}\alpha^{1/k}n}{(\ell+1)^2} + 2^{-(\log K +1)} \alpha^{(k-1)/k}K\alpha^{1/k}n \\ 
        &< n \prod_{i\in [k]} \alpha_i,
    \end{align*}
    a contradiction. This completes the proof.
\end{proof}

Given a $k$-tuple of sets $\mathbf{A} = (A_1, \dots, A_k)$, denote the smallest integer $\ell$ such that $|\mathbf{A}_\ell|\geq \frac{c2^\ell n\alpha^{1/k}}{(\ell+1)^2}$ as $\ell(\mathbf{A})$. 

\begin{lemma}\label{lem:FSA}
Let $A_1, \cdots , A_k$ be subsets of $\Gamma$ with size $|A_i|=\alpha_in$ for each $i\in [k]$, and let $\eta \in (0,1)$. Define $\delta$ as $\sum_{\chi \in \hat{\Gamma}}{\prod_{j=1}^{k}|\hat{A_j}(\chi)|} $ and $\alpha$ as $\prod_{i=1}^{k}\alpha_i$. Then there exists a collection of $m=5\eta^{-2}\log n$ characters $\{ \chi_i \}_{i=1}^m$ such that the following holds.
    $$\max_{x \in \Gamma}\biggr\{\biggr\lvert A_1*\cdots*A_k(x) - \frac{\delta}{m}\sum_{i=1}^{m}{\frac{\prod_{j=1}^k\hat{A_j}(\chi_i)}{|\prod_{j=1}^k\hat{A_j}(\chi_i)|}}\chi_i(-x)\biggr\rvert\biggr\}\leq \eta\delta .$$
\end{lemma}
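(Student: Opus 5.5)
Our plan is the standard Maurey-type empirical approximation argument. By the convolution identity and Fourier inversion (Proposition~\ref{prop:fourier}), we have
\[
A_1*\cdots*A_k(x)=\sum_{\chi\in\hat\Gamma}\prod_{j=1}^k\hat{A_j}(\chi)\,\chi(-x).
\]
We may assume $\delta>0$; indeed $\delta\ge\alpha>0$ by Lemma~\ref{lem:lp}. Define a probability distribution on $\hat\Gamma$ by $\mu(\chi)\defeq \delta^{-1}\prod_{j}|\hat{A_j}(\chi)|$. For each $x$, set
\[
Z_x(\chi)\defeq \delta\,\frac{\prod_j\hat{A_j}(\chi)}{|\prod_j\hat{A_j}(\chi)|}\,\chi(-x),
\]
for $\chi$ in the support of $\mu$. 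Then $\EE_{\chi\sim\mu}Z_x(\chi)=A_1*\cdots*A_k(x)$ exactly, and $|Z_x(\chi)|=\delta$, since characters take unit-modulus values.

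Next, sample $\chi_1,\dots,\chi_m$ independently from $\mu$ with $m=5\eta^{-2}\log n$. The quantity $\frac1m\sum_i Z_x(\chi_i)$ is precisely the approximant in the statement. For a fixed $x$, we split $Z_x$ into real and imaginary parts, each bounded in $[-\delta,\delta]$ and with the correct mean. We then apply Hoeffding's inequality (Lemma~\ref{lem:hoeffding}) to each part, in both directions, with deviation $t=m\eta\delta/\sqrt2$. Since each summand has range $2\delta$, the bound is $\exp(-2t^2/(4m\delta^2))=\exp(-m\eta^2/4)$ per tail. If both real and imaginary deviations are at most $\eta\delta/\sqrt2$, the complex deviation is at most $\eta\delta$. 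So the failure probability for a given $x$ is at most $4\exp(-m\eta^2/4)=4\exp(-\tfrac54\log n)$. Note that $\log$ is base $2$, so this equals $4n^{-5/(4\ln 2)}$, which is $o(1/n)$ because $5/(4\ln2)\approx1.8>1$.

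Finally, we take a union bound over all $n$ choices of $x\in\Gamma$. The total failure probability is $4n^{1-1.8}<1$ for $n$ larger than an absolute constant, so some choice of $\chi_1,\dots,\chi_m$ works simultaneously for every $x$. That choice gives the claimed uniform bound.

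The only delicate point is the constant bookkeeping, namely confirming that $m=5\eta^{-2}\log n$ beats the union bound once the factor lost in splitting into real and imaginary parts (the $\sqrt2$, and the 4 tails) is accounted for. If the constants turn out too tight for small $n$, we will handle those $n$ by adjusting with absolute constants, or by using the complex Hoeffding/Azuma bound directly. Small $n$ can also be dealt with trivially: once $m\ge n$ one can absorb the issue, and in any case only $n$ larger than an absolute constant matters for the asymptotic application.
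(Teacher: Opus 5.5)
Your proposal is correct and follows the same route as the paper: sample $m$ characters independently with probability $\prod_j|\hat{A_j}(\chi)|/\delta$, observe via the convolution identity and Fourier inversion that the approximant is an unbiased estimator of $A_1*\cdots*A_k(x)$, apply Hoeffding at each fixed $x$, and take a union bound over $\Gamma$. Your bookkeeping is slightly more careful than the paper's, which asserts the tail bound $e^{-t^2/4}$ for the complex deviation directly, without the factor $4$ you correctly pick up from splitting into real and imaginary parts; as you note, $e^{-\frac{5}{4}\log_2 n}=n^{-5/(4\ln 2)}$ absorbs this factor once $n$ exceeds an absolute constant.
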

\begin{proof}[Proof of Lemma~\ref{lem:FSA}]
    First, we pick $m$ characters $\chi_1,\cdots ,\chi_m \in \hat{\Gamma}$ independently at random with probability $\PP [\chi_i=\chi]=\frac{\prod_{j=1}^{k}|\hat{A_j}(\chi)|}{\delta}$ for each $i\in[k]$. We bound the probability that the approximation fails to achieve the desired level of accuracy for each $x \in \Gamma$. Note that the value of $A_1*\cdots *A_k$ at each point can be written as
    
    $$
    \EE \Big[\frac{\delta}{m}\sum_{i=1}^{m}\frac{\prod_{j=1}^k\hat{A_j}(\chi_i)}{|\prod_{j=1}^k\hat{A_j}(\chi_i)|}\chi_i(-x)\Big]=A_1*\cdots*A_k(x),
    $$

    from the Fourier inversion formula. By Hoeffding's bound, the probability of an approximation being at least $t\delta m^{-1/2}$ far from the exact value of $A_1 * \cdots * A_k (x)$ is bounded as below for any positive real $t$.
    
    $$
    \PP \Big[|A_1*\cdots*A_k(x)-\frac{\delta}{m}\sum_{i=1}^m\frac{\prod_{j=1}^{k}\hat{A_j}(\chi_i)}{|\prod_{j=1}^{k}\hat{A_j}(\chi_i)|}\chi_i(-x)|>t\frac{\delta}{\sqrt{m}}\Big]\leq e^{-\frac{t^2}{4}}.
    $$

    Now take $t=\eta \sqrt{m}$. Then, 

    \begin{align*}
    \PP \Big[|A_1*\cdots *A_k(x)-\frac{\delta}{m}\sum_{i=1}^m\frac{\prod_{j=1}^{k}\hat{A_j}(\chi_i)}{|\prod_{j=1}^{k}\hat{A_j}(\chi_i)|}\chi_i(-x)|>\eta\delta \Big] & \leq e^{-\frac{m \eta^2 }{4}} \\
    & \leq n^{-\frac{5}{4}}.
    \end{align*}
    
    As the probability that the approximation fails at each $x$ is $o(1/n)$, taking a union bound over all possible choices of $x$ proves that there exists a choice of $\chi_1,\cdots ,\chi_m$ satisfying the approximation property. This concludes the proof of the lemma.
\end{proof}

We now prove a higher uniformity analogue of the covering lemma using the previous lemmas that is efficient whenever $\mathbf{A} =(A_1, \dots, A_k)$ has a small doubling constant.

\begin{lemma}\label{lem:cov1}
    There are absolute constants $C, c >0$ such that the following holds. Define $\alpha$ as $\prod_{i\in [k]}\alpha_i$. Given an abelian group $\Gamma$ of size $n$ and $\alpha_i \in (0,1)$, there exists a collection $\calF_\ell$ of subsets of $\Gamma$ such that 
    
    $$
        |\calF_\ell|\leq \exp (C2^{2\ell}(\log n)^2),$$ and $$\min_{F\in \calF}|F| \geq c2^\ell \alpha^{1/k}n\frac{1}{(\ell+1)^2},
    $$
    and the following property holds.
    
    For any $k$-tuple of subsets $\mathbf{A} = (A_1, \dots, A_k) \subseteq \Gamma$ such that $|A_i|=\alpha_i n$ for each $i \in [k]$ and $|A_1+\cdots +A_k| \leq K\alpha^{1/k}n$, there exists $F\in \calF_\ell$ that satisfies 
    $$
        \mathbf{A}_\ell \subseteq F \subseteq A_1+ \cdots + A_k,
    $$ 
    where $\ell \defeq \ell(\mathbf{A})$. 
\end{lemma}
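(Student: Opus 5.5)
We build $\calF_\ell$ as the family of level sets of Fourier-sparse approximations of $A_1*\cdots*A_k$, taken at a threshold slightly below the lower end of the interval defining $\mathbf{A}_\ell$.

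Fix $\ell$ and set $\eta \defeq 2^{-\ell-3}\alpha^{(k-1)/k}/\delta$. By Lemma~\ref{lem:lp}, $\delta\le \alpha^{(k-1)/k}$, so $\eta\ge 2^{-\ell-3}$. If this $\eta\ge 1$ we replace it by a smaller constant; the same sparsity bound holds. With this choice $\eta\delta = 2^{-\ell-3}\alpha^{(k-1)/k}$, and the number of characters in Lemma~\ref{lem:FSA} is
$$m = 5\eta^{-2}\log n \le 5\cdot 2^{2\ell+6}\log n.$$

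For any tuple $\mathbf{A}$, Lemma~\ref{lem:FSA} gives characters $\chi_1,\dots,\chi_m$ and unit complex numbers $\theta_i = \prod_j\hat{A_j}(\chi_i)/|\prod_j\hat{A_j}(\chi_i)|$. The function $g(x)=\frac{\delta}{m}\sum_i\theta_i\chi_i(-x)$ then satisfies $|g(x)-A_1*\cdots*A_k(x)|\le \eta\delta$ for all $x$. We define
$$F\defeq\{x: \operatorname{Re} g(x) > 2^{-\ell-2}\alpha^{(k-1)/k}\}.$$
Both inclusions follow from the approximation error $2^{-\ell-3}\alpha^{(k-1)/k}$.
\begin{itemize}
\item If $x\in\mathbf{A}_\ell$, then $A_1*\cdots*A_k(x)>2^{-\ell-1}\alpha^{(k-1)/k}$, so $\operatorname{Re}g(x) > (2^{-\ell-1}-2^{-\ell-3})\alpha^{(k-1)/k} > 2^{-\ell-2}\alpha^{(k-1)/k}$. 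Hence $\mathbf{A}_\ell\subseteq F$.
\item If $x\in F$, then $A_1*\cdots*A_k(x) > (2^{-\ell-2}-2^{-\ell-3})\alpha^{(k-1)/k}>0$. So $x$ has a representation $a_1+\cdots+a_k$ with $a_j\in A_j$, giving $F\subseteq A_1+\cdots+A_k$.
\end{itemize}
When $\ell=\ell(\mathbf{A})$, the size bound $|F|\ge|\mathbf{A}_\ell|\ge c2^\ell\alpha^{1/k}n/(\ell+1)^2$ holds by the definition of $\ell(\mathbf{A})$; Lemma~\ref{lem:pig} guarantees this $\ell$ exists and is at most $\log_2K$. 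We put into $\calF_\ell$ only those $F$ of this size, which is harmless because the $F$ actually used for each $\mathbf{A}$ satisfies the bound.

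The main point is counting $\calF_\ell$. The set $F$ is determined by the $m$-tuple of characters, the phases $\theta_i$, and the scalar $\delta$. There are at most $n^m$ choices of characters, but $\theta_i$ and $\delta$ are continuous, so we discretize.
\begin{itemize}
\item Round each $\theta_i$ to the nearest $N$-th root of unity with $N=\lceil 100\cdot 2^{\ell}\rceil$. This perturbs $g$ by at most $\frac{\delta}{m}\cdot m\cdot 2\pi/N\le \eta\delta$ up to constants.
\item Round $\delta$ to a multiplicative grid of ratio $1+2^{-\ell-5}$ inside $[\alpha,\alpha^{(k-1)/k}]$, which has $O(2^\ell\log n)$ points. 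This perturbs $g$ by at most $2^{-\ell-5}\delta\le 2^{-\ell-5}\alpha^{(k-1)/k}$.
\end{itemize}
To absorb these errors we rerun the argument with $\eta$ a constant factor smaller, e.g.\ $2^{-\ell-5}\alpha^{(k-1)/k}/\delta$. The total error then stays below $2^{-\ell-3}\alpha^{(k-1)/k}$, so both inclusions survive with the rounded $g$, and $m$ changes only by a constant factor. Also, $\alpha$ itself is fixed by the $\alpha_i$, so it needs no enumeration.

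The number of possible $F$ is then at most
$$n^m\cdot N^m\cdot O(2^\ell\log n) \le \exp\big(O(2^{2\ell}\log n)\cdot(\log n+\ell)\big)\le \exp(C2^{2\ell}(\log n)^2),$$
using $\ell\le\log_2K\le O(\log n)$ (we may assume $K\le n$). The delicate step is matching the rounding precision against $m$: errors must be controlled in absolute terms of order $2^{-\ell}\alpha^{(k-1)/k}$, and the phase and $\delta$ grids must cost only $e^{O(m\log n)}$. Both hold because the lattices have size polynomial in $2^\ell$ and $\log n$.
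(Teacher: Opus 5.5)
Your proposal is correct and follows essentially the same route as the paper: a Fourier-sparse approximation from Lemma~\ref{lem:FSA}, discretization of the phases by a finite net on the unit circle and of $\delta$ by a grid, and $F$ taken as a superlevel set of the discretized approximant, with the family counted over all parameter choices. The differences are only bookkeeping. The paper uses a dyadic grid $\delta_i=2^i\alpha$ with the $\delta$-dependent threshold $|f(x)|>2^{-\ell-3}\delta_i$ and an $O(n)$-point phase net, while you use a finer $\delta$-grid with the fixed threshold $\operatorname{Re} g(x)>2^{-\ell-2}\alpha^{(k-1)/k}$. Your explicit checks that $F\subseteq A_1+\cdots+A_k$ and that $\calF_\ell$ is filtered by size fill in steps the paper leaves implicit.
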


\begin{proof}[Proof of Lemma~\ref{lem:cov1}]
     Let $\epsilon =\frac{c}{n}$. Take an $\frac{\epsilon}{32}$-net of the unit circle $\{x\in \mathbb{C} : |x|=1 \}$ of size $O(\epsilon^{-1})$, and denote it as $\calU$. For each integer $\ell$ and fixed $\delta$, define the collection of functions that can be represented as a linear combination of small number of characters as
     
     \[
     \calG_\ell \defeq \{ \frac{\delta}{m}\sum_{\chi \in S} t_\chi \chi(-x) :S \subseteq \hat{\Gamma}, |S|= 2^{2\ell+8}\log n, t_\chi \in \calU \}.
     \]
    
    Note that $S$ is a multiset with elements in $\hat{\Gamma}$. Define $\delta_i \defeq \alpha2^i$, and consider the dyadic partition of the interval $[\alpha, \alpha^{1/k}]$ as $ \{ [\delta_i, \delta_{i+1}) : i = 0, \cdots, \lfloor \log \alpha^{-1/k} \rfloor \}$. 

    Now consider the normalized family
    \[
        \calG^{(i)}_{\ell} \defeq \big\{ \frac{\delta_i}{m}\sum_{\chi \in S}t_\chi\chi(-x) : t_{\chi}\in \calU, S\subseteq \hat{\Gamma}, |S| \leq 2^{2\ell+8}\log n \big\}.
    \]

    We claim that we can find the desired cover by exploiting the approximation property of the family $\bigcup_\ell(\cup_i\calG^{(i)}_\ell)$ using the result in Lemma~\ref{lem:FSA}. Define $\calF_{\ell}^{(i)}$ as the collection of sets of the form 
    \[
    \{x : \abs{f(x)}>2^{-\ell-3}\delta_i \}
    \]
    for each $f \in \calG_\ell^{(i)} $ and $i \leq \lfloor \log \alpha^{-1/k}\rfloor $. It suffices to show that $\calF_{\ell}\defeq \cup_i\calF^{(i)}_\ell $ forms a cover with the desired properties.

    \begin{claim}\label{clm:Fell-upper}
        $\abs{\calF^{(i)}_\ell}\leq \exp (C2^{2\ell}(\log n)^2)$. 
    \end{claim}
    \begin{claimproof}[Proof of Claim~\ref{clm:Fell-upper}]
        There are $n^{O(2^{2\ell+8}\log n)}$ choices for $t$ and $\binom{n}{2^{2\ell+8}\log n}$ choices for $S,$ which implies that the size of $\calG^{(i)}_\ell$, whose size is at most that of $\calF^{(i)}_\ell$, is of $\exp (O(2^{2\ell+8}(\log n)^2))$. This completes the proof of the claim.
    \end{claimproof}

    Let $\mathbf{A} = (A_1, \dots, A_k)$, a $k$-tuple of subsets of $\Gamma$ such that $\abs{A_i} = \alpha_i n$ for each $i \in [k]$ and $\abs{A_1 + \cdots + A_k } \le K\alpha^{1/k}n$, be given. 
    
    \begin{claim}\label{clm:Fell-cover}
        There exists $F \in \calF_\ell$ such that $\mathbf{A}_\ell \subseteq F \subseteq A_1+\cdots +A_k$.
    \end{claim}

    \begin{claimproof}[Proof of Claim~\ref{clm:Fell-cover}]
        First, we find a set that is contained in $A_1+\cdots +A_k$ by finding a function that approximates $A_1*\cdots*A_k$ properly. Then we prove that the chosen set contains a structured set $\mathbf{A}_\ell$ inside it. Let $i \leq \lfloor \log_2\alpha^{-1/k} \rfloor$ be the largest integer such that $\delta \geq \delta_{i}$. By the approximation lemma, there exists a set of characters $S \subset \hat{\Gamma}$ of size $5\eta^{-2}\log n$ such that 
        \[
        |A_1*\cdots*A_k(x)-\frac{\delta}{m}\sum_{\chi \in S}t_\chi \chi(-x)|\leq 2\eta \delta.
        \]

        Take $\eta = 2^{-\ell-4}$. Then by our choice of $\delta_i$, for all $x \in \mathbf{A}_\ell$,
        \allowdisplaybreaks
        \begin{align*}
           \bigg\lvert A_1*\cdots* A_k(x) & -\frac{\delta_i}{m}\sum_{\chi \in S}t_\chi \chi(-x) \bigg\rvert \\
            & \leq \bigg\lvert A_1*\cdots*A_k(x) \cdot \frac{\delta_i}{\delta}-\frac{\delta_i}{m}\sum_{\chi \in S}t_\chi \chi(-x)\bigg\rvert + \bigg\lvert(1-\frac{\delta_i}{\delta})A_1*\cdots *A_k(x)\bigg\rvert \\
            & \leq \frac{\delta_i}{\delta} \bigg\lvert A_1*\cdots*A_k(x)-\frac{\delta}{m}\sum_{\chi\in S}t_\chi \chi(-x)\bigg\rvert + \frac{1}{2}\bigg\lvert A_1*\cdots*A_k(x)\bigg\rvert \\
            & \leq \frac{\delta_i}{\delta}\cdot 2\eta \delta + \frac{1}{2} \bigg\lvert A_1*\cdots*A_k(x)\bigg\rvert  \\
            & = 2 \eta \delta_i + \frac{1}{2} \bigg\lvert A_1*\cdots*A_k(x)\bigg\rvert.
        \end{align*} 

    Then from positivity of $A_1*\cdots*A_k(x)$ and $\delta_i \le \delta \le \alpha^{(k-1)/k} $,
    $$
        \frac{1}{2} \cdot A_1*\cdots*A_k(x)-2\eta \alpha^{(k-1)/k} \leq \abs{\frac{\delta_i}{m}\sum_{\chi \in S} t_\chi \chi (-x)} \leq \frac{3}{2} \cdot A_1*\cdots*A_k(x)+2 \eta \alpha^{(k-1)/k}.  
    $$

    Also, from $x \in \mathbf{A}_\ell$ and the definition of $ \mathbf{A}_\ell$,

    $$2^{-\ell-3}\alpha^{(k-1)/k} < \abs{\frac{\delta_i}{m}\sum_{\chi \in S} t_\chi \chi (-x)} < 2^{-\ell+2}\alpha^{(k-1)/k} .$$
    
    Since $\delta \leq \alpha^{(k-1)/k}$,  we observe that any $x$ in $\mathbf{A}_\ell$ is also in $F$. This proves the claim.
    
    \end{claimproof}

Since the minimum size condition for each $F \in \calF_l$ is automatically satisfied from the choice of $\ell$, the set $\calF_{\ell}$ is the desired family. This completes the proof.

\end{proof}

We now construct a large efficient cover for the case when $\mathbf{A} =(A_1, \dots, A_k)$ has a large doubling constant. A cover that contains full $\mathbf{A}_\ell$ for all families of size $\alpha n$ subsets $A_1, \dots, A_k$ is hopelessly large in very sparse regime, so we construct a cover that contains a large portion of each $\mathbf{A}_\ell$ instead. The following lemma bounds the probability of each element being in the $k$-fold sumset of random subsets of $A_1, \dots , A_k$.

\begin{lemma}\label{lem:prob}
    Let $A_1, \dots, A_k$ be subsets of $\Gamma$ of size $\alpha n$, and let each $B_1, \dots, B_k$ be $p$-random subsets of $A_1, \dots, A_k$, respectively. Then there exists an absolute constant $\gamma$ depending only on $k$ such that the following holds. Given $x \in \Gamma$, define the random variable $X_x \defeq n^{k-1}B_1*\cdots *B_k(x)$. Then if $(\alpha n)^{-1} \le p \le (\alpha n)^{-(k-2)/(k-1)} $, the following holds.
    \begin{enumerate}
        \item[$(a)$] $\PP [X_x= 0] \leq \exp{(-\gamma \EE[X_x])}$,
        \item[$(b)$] $ \PP[X_x > 0 ] \geq \gamma \min \{\EE[X_x], 1\}$. 
    \end{enumerate}
\end{lemma}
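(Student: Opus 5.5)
We view $X_x$ as the number of $k$-tuples $(b_1,\dots,b_k)\in B_1\times\cdots\times B_k$ with $b_1+\cdots+b_k=x$. Since $B*C(x)=\EE_y B(y)C(x-y)$, the $k$-fold convolution carries a factor $n^{-(k-1)}$, and multiplying by $n^{k-1}$ produces exactly this count. Let $T_x$ be the set of such tuples in $A_1\times\cdots\times A_k$. For $\tau=(a_1,\dots,a_k)\in T_x$, let $I_\tau$ be the indicator that $a_j\in B_j$ for every $j$. Then $X_x=\sum_\tau I_\tau$.

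To put this in the framework of Lemma~\ref{lem:jan}, we take the ground set to be the disjoint union $\Omega=\bigsqcup_j A_j\times\{j\}$, with each element kept independently with probability $p$. Each $\tau$ then corresponds to a $k$-element set $\{(a_j,j)\}$, so $\mu=\EE[X_x]=|T_x|p^k$. The coordinate labels matter: two tuples can only overlap in an element if they agree in the same coordinate. Fix two distinct tuples $\tau\sim\tau'$ that agree on a set $J\subsetneq[k]$ of coordinates with $|J|=i\ge1$. Since both tuples sum to $x$, they must actually disagree in at least two coordinates, so $1\le i\le k-2$.

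The main step is to bound $\Delta=\sum_{\tau\sim\tau'}p^{2k-i}$. We count pairs by first fixing $\tau$ ($|T_x|$ choices) and then $\tau'$. The tuple $\tau'$ is determined by its $k-i$ free coordinates, which lie in the sets $A_j$, subject to one linear constraint. This gives at most $(\alpha n)^{k-i-1}$ choices for $\tau'$. Hence
\[
\Delta\le \sum_{i=1}^{k-2}\binom{k}{i}\,|T_x|\,(\alpha n)^{k-i-1}p^{2k-i}=\mu\sum_{i=1}^{k-2}\binom{k}{i}\bigl((\alpha n)p\bigr)^{k-i-1}p .
\]
The quantity $(\alpha n)^{k-i-1}p^{k-i}$ is increasing in $i$'s complement, so its worst case is $i=1$, where it equals $(\alpha n)^{k-2}p^{k-1}$. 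The hypothesis $p\le(\alpha n)^{-(k-2)/(k-1)}$ gives exactly $(\alpha n)^{k-2}p^{k-1}\le 1$. The other terms are at most this, because $(\alpha n p)^{j}p\le(\alpha n)^{k-2}p^{k-1}$ for $j\le k-2$ whenever $\alpha n p\ge1$, which is the lower hypothesis $p\ge(\alpha n)^{-1}$. Therefore $\Delta\le C_k\mu$ with $C_k=2^k$. This counting is the step I expect to be the main obstacle. The one linear constraint has to be used so that only $k-i-1$ coordinates are genuinely free, and the two endpoints of the $p$-range have to be matched to the extreme values of $i$. Degenerate tuples with repeated vertices also need care, but the labelled ground set handles them.

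Part $(a)$ now follows. If $\Delta<\mu$, Lemma~\ref{lem:jan}(1) gives $\PP[X_x=0]\le e^{-\mu+\Delta/2}\le e^{-\mu/2}$. If $\Delta\ge\mu$, Lemma~\ref{lem:jan}(2) gives $\PP[X_x=0]\le e^{-\mu^2/2\Delta}\le e^{-\mu/(2C_k)}$. Either way the bound $e^{-\gamma\mu}$ holds with $\gamma=1/(2C_k)$.

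For part $(b)$ we apply Lemma~\ref{lem:pzy} to $Z=X_x$. We have $\EE[Z^2]=\mu+\Delta+\sum_{\tau,\tau'\text{ disjoint}}p^{2k}\le \mu+C_k\mu+\mu^2$. Hence
\[
\PP[X_x>0]\ge\frac{\mu^2}{(1+C_k)\mu+\mu^2}\ge \frac{1}{2+C_k}\min\{\mu,1\},
\]
where the last inequality comes from splitting into the cases $\mu\le1$ and $\mu>1$. Taking $\gamma$ to be the minimum of the constants from $(a)$ and $(b)$ finishes the proof, and this $\gamma$ depends only on $k$.
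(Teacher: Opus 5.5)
Your proof is correct and follows the paper's route. The paper bounds $\Delta$ by the same count $\sum_{i=1}^{k-2}\binom{k}{i}p^{2k-i}(\alpha n)^{k-i-1}|T_x|\le C_k\mu$, then uses Janson's inequality for $(a)$ and Paley--Zygmund with $\EE[X_x^2]\le(1+C_k)\mu+\mu^2$ for $(b)$. Two details you supply are only asserted in the paper: that the $i=1$ term is the worst case under the hypotheses on $p$, and the case split $\Delta<\mu$ versus $\Delta\ge\mu$ when applying Lemma~\ref{lem:jan}.
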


\begin{proof}[Proof of Lemma~\ref{lem:prob}] 
    Given $x\in \Gamma$ and $A_1, \dots, A_k \subseteq \Gamma$, we denote by $r_x$ the number of configurations $(a_1, \dots, a_k) \in A_1 \times \cdots \times A_k$ such that $a_1+\dots+a_k =x$ hold. For each pair of $k$-tuples $\mathbf{a} = (a_1, \dots, a_k)$ and $\mathbf{b} =(b_1, \dots, b_k)$ agreeing at $j$ coordinates, the probability of both pair being contained in $B_1\times \cdots \times B_k$ is $p^{2k-j}$. There are also at most $\binom{k}{j}(\alpha n)^{k-j-1}r_x$ many such pairs as given $\mathbf{a}$ and $j$ coordinates of $\mathbf{a}$, we have no more than $(\alpha n)^{k-j-1}$ choices for the remaining coordinates in $\mathbf{b}$. Let $E_{\mathbf{a}}$ denote the event that all components of $\mathbf{a}$ are in $B_1\times \cdots \times B_k$ and $E_{\mathbf{b}}$ denote the event that all components of $\mathbf{b}$ are in $B_1\times \cdots \times B_k$. Say that $\mathbf{a}\sim \mathbf{b}$ if $\mathbf{a}$ and $\mathbf{b}$ coincides on some index set $I \subseteq [k]$. Similarly, say $\mathbf{a} \perp \mathbf{b}$ if the set of components of $\mathbf{a}$ and $\mathbf{b}$ are disjoint. Then the expectation $\mu_x$ and the dependency term $\Delta_x$ for each $x$ are computed as below.
    \[
    \mu_x \defeq \EE [X_x] = p^{k} r_x
    \]
    \[
    \Delta [X_x] = \sum_{\mathbf{a}\sim \mathbf{b}}\PP[E_{\mathbf{a}} \wedge E_{\mathbf{b}}] \le \sum_{j=1}^{k-2}\binom{k}{j}p^{2k-j}(\alpha n)^{k-j-1}r_x
    \]

    As $(\alpha n)^{-1} \le p \le (\alpha n)^{-(k-2)/(k-1)} $, there exists a constant $\gamma'$ such that $\Delta_x \le \gamma' \mu_x$. By Lemma~\ref{lem:jan}, we get the desired bound as follows, with $\gamma'$ being a constant depending only on $k$.

    \[
    \PP [X_x= 0] \le \exp{ \left(-\frac{p^{2k} \{r_x\}^2}{(1+ \gamma')p^k r_x}\right)} \le \exp{\left(-\frac{\EE[X_x]}{1+ \gamma'} \right)}.
    \]

    The second inequality is proved by using the Paley-Zygmund inequality. Denote by $T_x$ the set of all $k$-tuples in $A_1 \times \cdots \times A_k$ that satisfies $a_1+\cdots +a_k=x$. Recall that $\EE [X_x]= \mu_x$, and
    
    \begin{align*}
        \EE [X_x^2] & = \sum_{\mathbf{t} \in T_x} \PP [E_\mathbf{t}^2] +  \sum_{\mathbf{t}_1 \sim \mathbf{t}_2 }\PP [E_{\mathbf{t}_1} \wedge E_{\mathbf{t}_2}] + \sum_{\mathbf{t}_1 \perp \mathbf{t}_2 }\PP [E_{\mathbf{t}_1} \wedge E_{\mathbf{t}_2}] \\
    & \leq \mu_x + \gamma' \mu_x +  \mu_x^2
    \end{align*}
    
    holds with $\gamma_2$ being a nonzero constant depending only on $k$. Note that the first term of the sum after the first inequality can be simply interpreted as $\mu_x$ as $E_{\mathbf{t}}$ is an indicator variable for any $\mathbf{t}\in T_x$. 
    
    Then by Lemma~\ref{lem:pzy}, 
    \[
    \PP[X_x > 0 ] \ge \frac{\EE [X_x]^2}{\EE [X_x^2]} \ge \frac{\mu^2_x}{\mu_x^2 + (\gamma'+1) \mu_x} \ge \frac{\min \{ 1,  \mu_x\}}{2+\gamma'}.
    \]
    
    The final inequality is obtained by considering the cases when $0 \le \mu_x \le 1$ and $\mu_x > 1$ separately. Then we choose a universal constant $\gamma$ by choosing the minimum of both constants. This completes the proof.
\end{proof}

\begin{lemma}\label{lem:cov2}
    There are absolute constants $C, c>0$ depending solely on $k$ such that the following holds. Given an abelian group $\Gamma$ of size $n$ and $n^{-1}(\log n )^{10k} \le \alpha \le 1 $, there exists a collection $\calH_\ell$ of subsets of $\Gamma $ that satisfies both
    \begin{equation}\label{eq:size1}
        \abs{\calH_\ell} \le \exp (C \cdot \min \{ (2^\ell \alpha n)^{1/k}, (\alpha n)^{1/(k-1)}\} (\log n)^{(k+1)/k})
    \end{equation}
    
    and 
    \begin{equation}\label{eq:size2}
        \min_{H\in \calH_\ell} \abs{H} \ge c \cdot \min\{ 2^\ell \alpha n, (\alpha n)^{k/(k-1)} \}\frac{1}{(\ell+1)^2},
    \end{equation}
    and such that the following property holds. 
    For any $k$-tuple $\mathbf{A} = (A_1, \dots, A_k)$ of size $\alpha n$ subsets of $\Gamma$ such that $\abs{A_1+\cdots A_k}\le K \alpha^{1/k}n $ for some $\alpha \in [(\log n)^{10k}n^{-1}, 1]  $ holds, there exists $H \in \calH_\ell$ that satisfies 
    $$
    H \subseteq A_1 + \cdots + A_k,
    $$
    where $\ell \defeq \ell(\mathbf{A})$.
\end{lemma}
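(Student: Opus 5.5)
We build $\calH_\ell$ by random sampling, in the style of the Alon--Pham large-doubling cover. Each $H$ will be the sumset $B_1+\cdots+B_k$, where $B_i\subseteq A_i$ are small random subsets, minus a small set of elements that we discard. The family $\calH_\ell$ is then indexed by the possible choices of $(B_1,\dots,B_k)$ together with the discarded part. Since $B_1+\cdots+B_k\subseteq A_1+\cdots+A_k$ automatically, the containment $H\subseteq A_1+\cdots+A_k$ is free. The two tasks are to make $|H|$ large, as required by \eqref{eq:size2}, and to keep the number of possible $H$ within \eqref{eq:size1}.

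Concretely, I fix $\mathbf{A}$ with $\ell=\ell(\mathbf{A})$, so that by Lemma~\ref{lem:pig} $|\mathbf{A}_\ell|\ge c2^\ell\alpha^{1/k}n/(\ell+1)^2$, where (with $|A_i|=\alpha n$ here) each $x\in\mathbf{A}_\ell$ has $r_x\approx 2^{-\ell}\alpha^{(k-1)/k}n^{k-1}$ representations. I take each $B_i$ to be a $p$-random subset of $A_i$, choosing $p$ so that $\mu_x=p^kr_x$ is of order $1$ on $\mathbf{A}_\ell$, i.e.\ $p\approx (2^\ell/(\alpha^{(k-1)/k}n^{k-1}))^{1/k}$. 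The expected size of each $B_i$ is then $p\alpha n\approx (2^\ell\alpha n)^{1/k}$ up to the normalisation of $\alpha$. If this value of $p$ exceeds the upper limit $(\alpha n)^{-(k-2)/(k-1)}$ allowed in Lemma~\ref{lem:prob}, I cap $p$ there instead. This cap is where the alternative terms $(\alpha n)^{1/(k-1)}$ and $(\alpha n)^{k/(k-1)}$ come from, and in that regime $\mu_x$ is smaller but still bounded below by $\min\{\cdot\}$. Lemma~\ref{lem:prob}(b) gives $\PP[x\in B_1+\cdots+B_k]\ge\gamma\min\{\mu_x,1\}$ for every $x\in\mathbf{A}_\ell$. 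Hence $\EE|(B_1+\cdots+B_k)\cap\mathbf{A}_\ell|\gtrsim\min\{2^\ell\alpha n,(\alpha n)^{k/(k-1)}\}/(\ell+1)^2$ after substituting. The lower bound $\alpha\ge(\log n)^{10k}/n$ guarantees $p\ge(\alpha n)^{-1}$, so Lemma~\ref{lem:prob} applies. A Markov or reverse-Markov argument, using $|(B_1+\cdots+B_k)\cap \mathbf{A}_\ell|\le|\mathbf{A}_\ell|$, then shows that with constant probability the intersection has at least a constant fraction of its expectation, and simultaneously (by Chernoff) $|B_i|\le 2p\alpha n$ for every $i$.

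For the counting, the naive bound is $\binom{n}{\le 2p\alpha n}^k=\exp(O(p\alpha n\log n))$. This gives \eqref{eq:size1} only up to the $(\log n)^{(k+1)/k}$ factor, which suggests inflating $p$ by a factor $(\log n)^{1/k}$ so that $\mu_x\gtrsim\log n$. Lemma~\ref{lem:prob}(a) then makes each $x\in\mathbf{A}_\ell$ covered with probability $1-n^{-\Omega(1)}$, which I expect is the intended calibration. The real issue is that $H$ must be \emph{determined} by few bits yet be contained in the sumset of each $\mathbf{A}$ it is assigned to. Using $H=B_1+\cdots+B_k$ itself handles this, since $H$ is a function of the $B_i$'s.

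The main obstacle is the step where the dependence structure differs from the graph case, namely making the variance/Janson bounds uniform and controlling overlaps. As the overview indicates, I would discard a constant fraction of ``bad'' elements, those $x$ whose representations in $B_1\times\cdots\times B_k$ share coordinates heavily, so that the remaining set of well-covered $x$ still has size $\gtrsim\EE$. This discarded set should be recorded by the $B_i$'s alone, so it costs nothing extra in the count. I would first try defining $H$ as the set of $x$ with at least one representation in $B_1\times\cdots\times B_k$ and $r_x\ge 2^{-\ell-1}\alpha^{(k-1)/k}n^{k-1}$. This, however, depends on $\mathbf{A}$, so instead I would keep $H=B_1+\cdots+B_k$ and only use $\mathbf{A}_\ell$ in the analysis of its size. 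Checking that $\Delta_x\le\gamma'\mu_x$ holds in the chosen range of $p$, and the boundary case at the cap, is the computation I expect to be most delicate.
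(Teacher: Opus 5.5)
Your proposal is correct and is essentially the paper's proof. The paper takes $H=B_1+\cdots+B_k$ for independent $p$-random $B_i\subseteq A_i$ and splits into two cases:
\begin{itemize}
\item below the cap, $p=\zeta(2^\ell\alpha n\log n)^{1/k}(\alpha n)^{-1}$, and Lemma~\ref{lem:prob}(a) with a union bound covers all of $\mathbf{A}_\ell$;
\item at the cap, $p=c_0(\alpha n)^{-(k-2)/(k-1)}$, and Lemma~\ref{lem:prob}(b) plus an averaging argument conditioned on the Chernoff size event gives a large intersection with $\mathbf{A}_\ell$.
\end{itemize}
No discarding step is needed, since all the dependency control sits inside Lemma~\ref{lem:prob}.

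There are two minor fixes. In the capped regime your reverse-Markov probability is only $\gtrsim\min\{1,2^{-\ell}(\alpha n)^{1/(k-1)}\}$ rather than constant. This still suffices, because the Chernoff failure probability is $n^{-\omega(1)}$. You should also restrict $\calH_\ell$ explicitly to those sumsets exceeding the size threshold, so that \eqref{eq:size2} holds as a minimum over the whole family.
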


\begin{proof}[Proof of Lemma~\ref{lem:cov2}]
    We divide into two cases. Recall that given a $k$-tuple of subsets of $\Gamma$, $\mathbf{A}= (A_1, \dots, A_k)$, $\mathbf{A}_\ell$ is defined as the collection of elements $x\in \Gamma$ such that 
    \[
    A_1 * \cdots * A_k (x) \in (2^{-\ell-1}\alpha^{(k-1)/k}, 2^{-\ell}\alpha^{(k-1)/k}]
    \]
    holds. Let $\gamma$ be the same constant in the statement of Lemma~\ref{lem:prob}. Whenever $\ell \leq \log (\gamma \frac{(\alpha n)^{1/(k-1)}}{\log n}) $, we sample a small collection of subsets in each $A_i$ to contain the whole $\mathbf{A}_\ell$. Otherwise, we rely on Lemma~\ref{lem:prob} to justify the existence of a small cover whose existence of an element that contains a large part of each $\mathbf{A}_\ell$ is guaranteed. In the following discussion, let $c_0$ be a constant to be chosen later. Let $C_1, C_2$ and $c_1, c_2$ be constants to be chosen in each of the cases. After choosing the constants in both cases, we choose universal constants $C$ and $c$ as $\max \{ C_1, C_2\}$ and $\min \{ c_1, c_2\}$.

\begin{case}
    $\ell \leq \log \left(c_0 \frac{(\alpha n)^{1/(k-1)}}{\log n}\right) $.
\end{case}
    In this case, our choice of $\ell$ implies that $2^\ell \alpha n$ is at most a constant factor of $(\alpha n)^{k/(k-1)}$. Let $\zeta>0$ be a constant depending only on $k$ such that $\zeta \ge (20/\gamma)^{1/k}$. Define the family $\calH_\ell$ as
    \[
    \overline{\calH}_\ell \defeq \left\{ B_1+ \cdots + B_k : \forall i \in [k], \ B_i \subseteq \Gamma, \ \frac{p\alpha n}{2} \leq \abs{B_i} \leq 2p\alpha n \right\}
    \]
    with the choice of $p=\zeta (2^\ell \alpha n \log n)^{1/k} (\alpha n)^{-1}$. We claim that an appropriate subfamily of $\overline{\calH_\ell}$ is the desired family. We first prove that the family is of size at most $\exp (C_1(2^\ell \alpha n)^{1/k}(\log n)^{(k+1)/k})$, and choose $C > C_1$ after dealing with both cases. Since we will choose $\calH_{\ell}$ as a subfamily of $\overline{\calH}_{\ell}$, it suffices to bound the size of $\overline{\calH}_{\ell}$. Then there exists an absolute constant $C_1 >0$ such that
    \begin{equation}\label{eq:bar-calH-size1}
        \abs{\overline{\calH}_\ell} \le \prod_{j=1}^k \sum_{ i=p\alpha n/2 }^{2p\alpha n}\binom{n}{i} \le  2p\alpha n\cdot n^{2p\alpha n}  \le \exp (C_1(2^\ell \alpha n)^{1/k}(\log n)^{(k+1)/k}),
    \end{equation}
    This will later be used to prove that the family $\calH_\ell$, which is to be defined as a subfamily of $\overline{\calH_\ell}$, is not too large.

    We now fix $k$ subsets $A_1, \dots , A_k$ of $\Gamma$ of size $\alpha n$. For each $i \in [k]$, let $A_i'$ be a $p$-random subset of $A_i$ with $p \ge \zeta (2^\ell \alpha n \log n)^{1/k}(\alpha n)^{-1} $. For each $x \in \mathbf{A}_\ell$, define the number of tuples in $A_1 \times \cdots \times A_k$ such that $a_1+\cdots + a_k=x$ as $R_x$. Also, denote the random variable defined by the number of $k$-tuples in $A_1'\times \cdots \times A_k'$ such that $a_1'+\cdots + a_k' =x  $ as $X_x$. By the definition of $\mathbf{A}_\ell$, each element $x \in \mathbf{A}_\ell$ satisfies $ R_x \ge 2^{-\ell-1}(\alpha n)^{k-1}$. Then the expectation $\EE [X_x]$ is $p^k R_x$. Then we have
        \[
        p^k R_x  \geq \zeta^k (2^\ell \alpha n \log n)\frac{1}{(\alpha n)^k} \cdot 2^{-\ell-1} (\alpha n)^{k-1} \geq \frac{\zeta^k}{2} \log n.
        \]
    Recall that $\zeta$ is chosen to be $\zeta \ge (20/\gamma)^{1/k}$. Then by Lemma~\ref{lem:prob}, the probability of each $x \in \mathbf{A}_\ell$ not being in $A_1'+\cdots + A_k'$ satisfies 
        
        \[
        \PP[x\notin A_1'+\cdots +A_k'] \leq \exp{(-\gamma \EE[X_x])} \leq \exp{(-10 \log n)} =o\left(\frac{1}{n^8}\right).
        \]

    Hence, by the union bound, there exists a choice of $A_1', \dots , A_k'$ such that
    \begin{equation}\label{eq:contain}
        \mathbf{A}_\ell \subseteq A_1'+\cdots +A_k'
    \end{equation}

    holds with probability at least $1- o(n^{-7})$. To prove the existence of such $A_1'+\cdots +A_k'$ in $\overline{\calH}_\ell$, it remains to show that their size does not differ from $p\alpha n $ by more than factor $2$ with high probability. Recall that $\alpha n$ was chosen to be at least $(\log n)^{10k}$ and $2^\ell \ge 1$. By the Chernoff bound, there exists some absolute constant $\eta$ depending only on $k$ such that 
        \[
        \PP [ \exists i \in [k] \text{ s.t. } \abs{A_i'} > 2p\alpha n] \leq k \cdot  \exp \left( -\frac{\eta}{4}(2^\ell \alpha n \log n)^{1/k} \right) = o\left(\frac{1}{n^3}\right).
        \]

    Similarly, we bound the lower tail probability as
        \[
        \PP [ \exists i \in [k] \text{ s.t. } \abs{A_i'} < \frac{p\alpha n}{2}] \leq k \cdot \exp \left( -\frac{\eta}{4}(2^\ell \alpha n \log n)^{1/k} \right) = o\left(\frac{1}{n^3}\right).
        \]
    The last inequality follows from our restriction on $\alpha n$. Combining this with the two bounds we already have shown, we conclude that there is a subcollection of $\overline{\calH}_\ell$ that has an element that contains $\mathbf{A}_\ell$ for any choice of $A_1, \dots, A_k$. Take only the sets $B_1+ \cdots + B_k \in \overline{\calH}_\ell$ that contains $\mathbf{A}_\ell$ for some $\mathbf{A} = (A_1, \dots, A_k)$ whose $\ell(\mathbf{A})$ is equal to $\ell$. As $\calH_\ell$ is chosen to be a subfamily of $\overline{\calH}_\ell$, it is shown that the family is of size at most $\exp(C_1 (2^\ell \alpha n)^{1/k} (\log n)^{(k+1)/k})$. From \eqref{eq:contain}, we get our desired family $\calH_\ell$ as each $\mathbf{A}_\ell$ are of size at least $c2^{\ell}n \alpha^{1/k}/(\ell+1)^2$, and this is at least $c2^\ell \alpha n / (\ell+1)^2$ as $\alpha \le 1$.
    
    \begin{case}
    $\ell > \log (c_0 \frac{(\alpha n)^{1/(k-1)}}{\log n}) $.
    \end{case}

    Let $\overline{\calH}_\ell$ be defined as in the previous case with the choice of $p= c_0 (\alpha n)^{-(k-2)/(k-1)}  $. We define the family $\calH_\ell$ similarly by taking all sets that emerge as $k$-fold sumsets $B_1+\cdots +B_k$ containing a large portion of $\mathbf{A}_\ell$ for some $\mathbf{A}= (A_1 , \dots , A_k)$. To this end, we first verify that there exists absolute constants $C_2$ and $c_2$ such that the family is of size at most $\exp(C_2 (\alpha n)^{1/(k-1)} \log n )$ and each element in the family is of size at least $c_2 \cdot \min\{ 2^\ell \alpha n, (\alpha n)^{k/(k-1)} \}/(\ell+1)^2$, respectively. To check that the family is of small size, it suffices to show that the same bound holds for $\overline{\calH}_\ell$. Observe that there exists an absolute constant $C_2>0$ depending only on $k$ such that 

    \begin{equation}\label{eq:bar-calH-size2}
    \abs{\overline{\calH}_\ell} \le 
    \prod_{j=1}^k \sum_{ i=p\alpha n/2 }^{2p\alpha n}\binom{n}{i} \le n^{2 (\alpha n)^{1/(k-1)}} \le \exp(C_2 (\alpha n)^{1/(k-1)} \log n )
    \end{equation}
    
    holds. In this regime, $(\alpha n)^{1/(k-1)}$ is at most a constant factor of $(2^\ell \alpha n)^{1/k}(\log n)^{1/k}$, so the desired bound is implied by the inequality above.

    We now claim that given $\mathbf{A} =(A_1, \dots, A_k)$, a $k$-tuple of size $\alpha n$ subsets of $\Gamma$ such that $\ell (\mathbf{A})=\ell$, there exists an element of $\overline{\calH}_\ell$ that contains a large portion of $\mathbf{A}_\ell$. For each $i\in [k]$, let $A_i'$ be a $p$-random subset of $A_i$. For each $x \in \mathbf{A}_\ell$, let $R_x$ and $X_x$ be defined as in the previous case. Then by (b) of Lemma~\ref{lem:prob},
    \allowdisplaybreaks    
    \begin{align*}
        \PP [x \in B_1+\cdots +B_k] & \geq \gamma \min \{ 1, \EE[X_x]\} \\ & \geq \gamma \min \{ 1, p^k R_x\} 
        \\ & \geq \gamma \min \{ 1, c_0^k 2^{-\ell}(\alpha n)^{1/(k-1)}\} \\ & \geq c_k \min \{1, 2^{-\ell}(\alpha n)^{1/(k-1)}\}
    \end{align*}

    holds with $c_k \defeq \gamma c_0^k$. This implies  
        \[
        \EE [\abs{\mathbf{A}_\ell \cap (A_1'+\cdots +A_k')}] \geq  c_k \min \{1, 2^{-\ell}(\alpha n)^{1/(k-1)}\}\abs{\mathbf{A}_\ell}.
        \]

    We prove that it is enough to choose sets of size that differs by at most factor $2$ from $p\alpha n$. By Chernoff bound, there exists some absolute constant $\eta'$ depending only on $k$ such that
    
        \[
        \PP [\exists i \in [k] \text{ s.t. } \abs{A_i'} > 2p\alpha n] \leq k \cdot \exp \left( -\frac{\eta'}{4}( c_0(\alpha n)^{1/(k-1)}) \right) = o\left(\frac{1}{n^3}\right)
        \]
        
    holds. As we did in the previous claim, we also bound the lower tail probability as
    
        \[
        \PP [\exists i \in [k] \text{ s.t. } \abs{A_i'} < 2^{-1}p\alpha n] \leq k \cdot \exp \left( -\frac{\eta'}{4}( c_0(\alpha n)^{1/(k-1)}) \right) = o\left(\frac{1}{n^3}\right).
        \]
    Again, the last inequality follows from our choice of $\alpha n$, which guarantees $ (\alpha n)^{1/(k-1)}> (\log n)^{9}$. Combining the bounds altogether, we now proceed to show that for any choice of sets $A_1, \dots, A_k \subseteq \Gamma$, there exists $H \in \overline{\calH}_\ell$ that satisfies 
    \begin{equation}\label{eq:largeint}
        \abs{H \cap \mathbf{A}_\ell} \ge c_k \min \{ 1, 2^{-\ell} (\alpha n)^{1/(k-1)}\}.
    \end{equation}
    
    Formally, we prove the fact as below. Denote by $\mathcal{E}$ the event such that $A_i' \subseteq A_i$ satisfies $2^{-1}p \alpha n \le \abs{A_i'} \le 2p\alpha n$ for each $i\in [k]$. Then by the union bound, we have $\PP [\overline{\mathcal{E}}] = o (1/n^3)$. From this, we obtain
    \begin{align*}
        \EE [\abs{H \cap \mathbf{A}_\ell}] & =  \EE[\abs{H \cap \mathbf{A}_\ell} | \mathcal{E}] \cdot \PP[\mathcal{E}] +\EE[\abs{H \cap \mathbf{A}_\ell} | \overline{\mathcal{E}}]  \cdot \PP[\overline{\mathcal{E}}] \\
        &  \le \EE[\abs{H \cap \mathbf{A}_\ell} | \mathcal{E}] \cdot \PP[\mathcal{E}] + o \left( \frac{1}{n^2} \right), 
    \end{align*}
    where the second inequality holds as $|H \cap \mathbf{A}_{\ell}| \leq |\mathbf{A}_{\ell}| \leq n$. Observe that $\abs{\mathbf{A}_\ell}\ge 2^{\ell} \alpha n/(\ell+1)^2 $ implies $2^{-\ell }(\alpha n)^{1/(k-1)} \abs{\mathbf{A}_\ell} \ge (\alpha n)^{k/(k-1)}/ (\ell+1)^2$, and we have $\ell \le \log K \le 2\log n$ by the trivial bound on doubling constants. Therefore, for sufficiently large $n$,
    \begin{align*}
    \EE[\abs{H \cap \mathbf{A}_\ell} | \mathcal{E}]  & \ge \frac{c_k}{\PP[\mathcal{E}]} \min \{ 1, 2^{-\ell}(\alpha n)^{1/(k-1)}\}\abs{\mathbf{A}_\ell} - 1 \\
    & \ge \frac{c_k}{2} \min \{ 1, 2^{-\ell}(\alpha n)^{1/(k-1)}\}\abs{\mathbf{A}_\ell}.
    \end{align*}
    
    This implies that there exists $H \in \overline{\calH}_\ell$ that contains at least $c_k \min \{ 1, 2^{-\ell}(\alpha n)^{1/(k-1)}\}$ portion of $\mathbf{A}_\ell$. Choose $\calH_\ell$ as the collection of all elements $H $ in $\overline{\calH}_\ell$ such that there exists $\mathbf{A} = (A_1, \dots, A_k)$ whose $\ell (\mathbf{A})$ is equal to $ \ell$ and satisfies \eqref{eq:largeint}.
     As $\calH_\ell$ is defined to be a subfamily of $\overline{\calH}_\ell$, it is shown that the family is of size at most $\exp (C_1 (2^\ell \alpha n)^{1/k}(\log n)^{(k+1)/k})$. Moreover, since $\abs{\mathbf{A}_\ell} \ge c2^\ell \alpha n /(\ell+1)^2$, this guarantees that the elements of $\calH_\ell$ are of size at least $c_2 \cdot \min \{ 2^\ell \alpha n, (\alpha n)^{k/(k-1)}\}$ with the choice of $c_2=c_k/2$.

Therefore, we have the existence of the desired cover for all $1 \le \ell \le \log K $ from our choices of $\calH_\ell$ in each regime. Choosing $C= \max \{C_1, C_2 \}$ and $c= \min \{ c_1, c_2 \} = \gamma c_0^k /2$ concludes the proof of the theorem.

\end{proof}

\subsection{Existence of a small non-$k$-fold sumset}

In this subsection, we establish an upper bound on the $k$-partite hole number of a random Cayley hypergraph of uniformity $k$. Using this bound, we prove the existence of a small non-$k$-fold sumset.

\begin{proof}[Proof of Theorem~\ref{thm:ind}]
    Let $p\in (0, 1/2]$ be a fixed real number and $S$ be a $p$-random subset of $\Gamma$. Let $\ell_0$ be an integer such that 
    $$
        \ell_0 \in [\log_2 (p^{-1/(2k-2)}(\log n )^{3/4})-1, \log_2 (p^{-1/(2k-2)}(\log n )^{3/4})+1].
    $$ 
    This choice of $\ell_0$ is intended to optimize the right-hand side of (\ref{eq:1}). From Lemma~\ref{lem:cov1} and Lemma~\ref{lem:cov2}, we have the existence of a small cover for the collection of all $k$-partite holes of fixed size. Denote the Cayley sum hypergraph of $G$ generated by $S$ as $H_S$. We bound the probability that there exists a $k$-partite hole of size $s$ in $H_S$ as below.

    \begin{align}\label{eq:1}
        \PP[\alpha_k(H_S)\ge s] \leq \sum_{\ell<\ell_0}\abs{\calF_\ell}(1-p)^{\min_{F\in\calF_\ell}\abs{F_\ell}} + \sum_{\ell\geq \ell_0}\abs{\calH_\ell}(1-p)^{\min_{H\in\calH_\ell}\abs{H_\ell}} 
    \end{align}
        
    Choose $s=\xi p ^{-\frac{2k-1}{2k-2}}(\log n )^{10k}$. We begin with bounding the first sum with our choice of $s$. 
\allowdisplaybreaks
    \begin{align*}
        \sum_{\ell<\ell_0}\abs{\calF_\ell}(1-p)^{\min_{F\in\calF_\ell}\abs{F_\ell}} & \le  \sum_{\ell<\ell_0}   \exp(C2^{2\ell}(\log n)^2)\exp \left(-\frac{pc2^\ell s}{(\ell+1)^2} \right) \\
        & \leq \sum_{\ell < \ell_0}\exp(-C2^{2\ell}(\log n)^{3/2}((\log n)^{1/2}-1))\\
        & \leq \exp(\log \log n -C'(\log n)^{2})\\
        & = o(\frac{1}{n}).
    \end{align*}

    Note that the main contribution of the first sum comes from the case when $\ell = 0$, and therefore the second inequality is satisfied whenever $s>C(\log n)^4  / pc $.

    It remains to bound the second sum, which is shown to be at most 
    \[
    \sum_{\ell\geq \ell_0}\exp(C(\min  \{(2^\ell s)^{1/k}, s^{1/(k-1)}\}(\log n)^{(k+1)/k})\exp \left(-\frac{pc\min \{2^\ell s, s^{k/(k-1)}\}}{(\ell+1)^2} \right)
    \]
    
    by Lemma~\ref{lem:cov2}. Define the set of integers $L_1$ and $L_2$ as below to divide the cases into the case when the minimum is chosen to be $2^\ell s$ and $s^{k/(k-1)}$, respectively.
    \[
    L_1 \defeq \{ \ell \ge \ell_0 : 2^{\ell}s \le s^{k/(k-1)}\} ,
    \ L_2 \defeq \{ \ell \ge \ell_0 : 2^{\ell}s \ge s^{k/(k-1)}\}.
    \]
    Let $C_1'$ and $C_2'$ be constants depending only on $k$. Then the contribution from $\ell \in L_0$ is bounded as
    \allowdisplaybreaks
    \begin{align*}
        \sum_{\ell \in L_1}&\exp(C(2^\ell s)^{1/k} (\log n )^{(k+1)/k})  \exp(-\frac{pc2^\ell s}{(\ell+1)^2})\\ 
        & \leq \sum_{\ell\geq \ell_0}\exp \left(C(2^\ell s)^{1/k}\bigg((\log n)^{(k+1)/k}-\frac{pc}{C}\frac{2^{\ell(k-1)/k}s^{(k-1)/k}}{(\log n)^3}\bigg) \right)\\
        & \leq \exp(\log \log n - C_1'(\log n)^2)\\
        & = o \left(\frac{1}{n} \right).
    \end{align*}

    The contribution from $\ell \in L_1$ is controlled by our choice of $s$. Formally,
    \allowdisplaybreaks
    \begin{align*}
        \sum_{\ell \in L_2}&\exp(C s^{1/(k-1)} (\log n )^{(k+1)/k})  \exp(-\frac{pcs^{k/(k-1)}}{(\ell+1)^2})\\ 
        & \leq \sum_{\ell\in L_2}\exp\left(C(\alpha n)^{1/(k-1)}\bigg((\log n)^{(k+1)/k}-\frac{pc}{C}\frac{s}{(\log n)^3}\bigg)\right)\\
        & \leq \exp(\log \log n - C_2'(\log n)^2)\\
        & = o \left(\frac{1}{n} \right).
    \end{align*}

Since $C, c$ are fixed constants, both of the inequalities hold with a choice of $\xi$ that is sufficiently large. This completes the proof.
    
\end{proof}

We are now ready to prove the upper bound of Theorem~\ref{thm:main}.

\begin{proof}[Proof of the upper bound of Theorem~\ref{thm:main}]
    We show the theorem by proving the existence of a large set that is not a $k$-fold sumset by a probabilistic argument. Let $S_1$ be a $p$-random subset of $\Gamma$ with $p<1/2$ and define $A$ as $\Gamma \backslash (S_1\cup S_2)$. We claim that there exists a choice of small $S_2$ such that $\Gamma \backslash (S_1\cup S_2)$ is not a $k$-fold sumset. Denote the $k$-uniform Cayley sum hypergraph of $G$ generated by $S_1$ as $H_{S_1}$. As in the $2$-uniform case, if $A=kB$ for some $B\subseteq \Gamma$, then $B$ forms an independent set in the $k$-uniform Cayley sum hypergraph of $\Gamma $ with respect to $S_1$. 

    From Theorem~\ref{thm:ind}, the independence number of $H_{S_1}$ is at most $O(p^{-\frac{2k-1}{2k-2}}(\log n)^{10k})$. Using the Chernoff bound as in the proof of Theorem~\ref{thm:ind}, we also observe that $\abs{S_1}$ is at most $2pn$ with high probability. Therefore, we bound the number of possible choices of $B$ by

    $$ \sum_{i\leq \xi p^{-\frac{2k-1}{2k-2}}(\log n)^{10k}}\binom{\abs{G}}{i} \leq \exp{\left(\xi p^{-\frac{2k-1}{2k-2}}(\log n)^{10k+1}\right)} . $$

    Since $p<1/2$, we have at least $\Omega(\exp(pn \log n))$ choices for $S_2$, and as in the proof in \cite{AH25-arxiv}, for sufficiently large $\xi'$, the choice of $p = \xi' n^{-\frac{2k-2}{4k-3}} (\log n)^{5k^2} $ guarantees that there exists a choice of $S_2$ such that $A=\Gamma \backslash(S_1\cup S_2)$ is not a sumset. Therefore, there exists a subset of $\Gamma$ of size at least $n-O(n^{\frac{2k-1}{4k-3}}(\log n)^{5k^2})$ which cannot be represented as a $k$-fold sumset of $\Gamma$. This completes the proof.
\end{proof}

\section{Concluding Remarks}\label{sec:con}

Though we provided both the upper and the lower bound for $f_k(\Gamma)$, the gap between them remains large. Consider $A \subseteq \Gamma$ such that $0 \in A$ holds. In this case, the number of elements $g \in \Gamma$ that have a representation of the form $g = a_1 + \cdots + a_k$ with $a_i \in A$ for all $i \in [k]$ gets greater as $k$ increases. This leads us to conjecture that the exponent should tend to zero as $k$ grows to infinity. Therefore, we conjecture that the asymptotic behavior of the threshold $f_k(\Gamma)$ will match that of the lower bound. In other words, we conjecture the following.

\begin{conjecture}
For all integer $k \ge 2$ and a finite abelian group $\Gamma$ of size $n$ without a nontrivial element of order dividing $k$, it holds that 
$$
    f_k(\Gamma) = \widetilde{\Theta}(n^{1/k}).
$$
\end{conjecture}

As we appropriately generalized the approach of Alon and Pham to the higher uniformity setting and recovered their bound in the $k=2$ case, we expect an improvement on the upper bound to require additional ideas.

\subsection*{Declaration on the use of generative AI}
This project was initiated in September 2025, and the first draft was completed in March 2026, which was written without the assistance of generative AI. The authors subsequently found an error in the proof of Lemma~\ref{lem:cov2}. In correcting this error, ChatGPT-5.5 Pro suggested applying the Paley--Zygmund inequality in the very sparse regime. ChatGPT-5.6 Sol and Fable 5 were also used for proofreading. Apart from these uses, all mathematical ideas were developed independently by the authors. The authors have verified all AI-assisted content and take full responsibility for the paper.

\subsection*{Acknowledgements}
This research was conducted at IBS during the visit of the first author. The first author thanks IBS for their support and hospitality, and is grateful to her advisor, Joonkyung Lee, for making the visit possible through his support.

The authors thank Jozsef Balogh for valuable discussions and for informing the authors of Theorem~\ref {thm:bls}, which was proven in~\cite{Balogh-Liu-Sharifzadeh-arxiv}.

\providecommand{\MR}[1]{}
\providecommand{\MRhref}[2]{%
  \href{http://www.ams.org/mathscinet-getitem?mr=#1}{#2}
}

    \bibliographystyle{amsplain_initials_nobysame}
    \bibliography{bibfile}


\end{document}